\documentclass[letterpaper, 10 pt, conference]{ieeeconf}  

\IEEEoverridecommandlockouts                              

\overrideIEEEmargins                                      

\usepackage{todonotes}
\usepackage{latexsym,amssymb,amsmath, amsbsy,amsopn, amstext}
\usepackage{graphicx,epsfig,tikz,pgf,hyperref,cancel,color,float,ifpdf}
\usepackage{extarrows,mathrsfs}
\usepackage{amsmath,  amssymb,stfloats}
\usepackage{tikz,subcaption}
\usepackage[]{caption}
\usetikzlibrary{automata, positioning, arrows}
\usepackage[ruled,vlined,linesnumbered]{algorithm2e}
\usepackage{bm}

\usepackage{etoolbox}
\makeatletter
\patchcmd{\@makecaption}
  {\scshape}
  {}
  {}
  {}
\makeatother
\usepackage{makecell}



\graphicspath{{./figures/}}


\newtheorem{definition}{\bfseries Definition}
\newtheorem{proposition}{\bfseries Proposition}
\newtheorem{example}{\bfseries Example}
\newtheorem{theorem}{\bfseries Theorem}
\newtheorem{lemma}{\bfseries Lemma}

\newtheorem{remark}{\bfseries Remark}

\newtheorem{problem}{\bfseries Problem}

\def\x{\bm{x}}

\def\u{\bm{u}}

\def\f{\bm{f}}

\newcommand{\z}{\bm{z}}
\newcommand{\cc}{\mathbf{c}}
\newcommand{\G}{\mathbf{G}}
\newcommand{\A}{\mathbf{A}}
\newcommand{\bb}{\mathbf{b}}

\newcommand{\Zc}{\mathcal{Z}}

\newcommand{\mat}[1]{\begin{bmatrix} #1 \end{bmatrix}}

\setlength{\marginparwidth}{2cm}
\allowdisplaybreaks[4]

\usepackage{xcolor}
\newif\ifdraft

\drafttrue

\title{\LARGE \bf
Reachability Analysis and Safety Verification of Neural Feedback Systems via Hybrid Zonotopes}

\author{Yuhao Zhang and Xiangru Xu
\thanks{Yuhao Zhang and Xiangru Xu are with the Department of Mechanical Engineering, University of Wisconsin-Madison,
        Madison, WI, USA. Email: 
        {\tt\small \{yuhao.zhang2,xiangru.xu\}@wisc.edu}.}%
}

\begin{document}
\maketitle
\begin{abstract}
Hybrid zonotopes generalize constrained zonotopes by introducing additional binary variables and possess some unique properties that make them convenient to represent nonconvex sets. 
This paper presents novel hybrid zonotope-based methods for the reachability analysis and safety verification of neural feedback systems. Algorithms are proposed to compute the input-output relationship of each layer of a feedforward neural network, as well as the exact reachable sets of neural feedback systems. In addition, a sufficient and necessary condition is formulated as a mixed-integer linear program to certify whether the trajectories of a neural feedback system can avoid unsafe regions. The proposed approach is shown to yield a formulation that provides the tightest convex relaxation for the reachable sets of the neural feedback system.  Complexity reduction techniques for the reachable sets are developed to balance the computation efficiency and approximation accuracy. Two numerical examples demonstrate the superior performance of the proposed approach compared to other existing methods.

\end{abstract}

\section{Introduction}\label{sec:intro}

Artificial neural networks have shown their extraordinary performance in many fields such as auto-driving systems \cite{grigorescu2020survey} and mobile robots \cite{pomerleau2012neural}. Implementation of neural networks in such controlled systems also raises safety concerns as even a small chance of failure may cause catastrophic consequences. Therefore, it's critical to find an efficient method to verify the safety properties of controlled systems with neural network components before real implementations. However, analyzing properties of neural networks is notoriously difficult due to their highly non-convex and nonlinear natures \cite{katz2017reluplex}.


Various methods have been proposed to perform reachability analysis and safety verification for the \emph{neural feedback systems} (i.e., feedback systems with neural network controllers) \cite{huang2019reachnn,ivanov2019verisig,yin2021stability,kochdumper2022open,dai2021lyapunov}. Based on quadratic constraints, a reachable set over-approximation method was proposed in \cite{Fazlyab2022Safety,hu2020reach} using semi-definite programming (SDP). A fast reachability method was introduced in \cite{everett2021reachability} by relaxing the SDP into linear programming (LP). Learning-based reachability methods were also developed in \cite{chakrabarty2020active,devonport2020data} for neural feedback systems with probabilistic guarantees on the correctness of the approximated reachable sets. Set-based methods were also proposed to compute the exact reachable sets of neural feedback systems using star sets \cite{tran2019safety} and constrained zonotopes \cite{zhang2022safety}. Despite their interesting results, these two methods can only deal with convex set representations which limit their usage for complex safety verification problems. Besides, the computation complexity increases rapidly for deep neural networks.


Recently, a new set representation named the \emph{hybrid zonotope} was introduced in \cite{bird2021hybrid}. Through the addition of binary generators, hybrid zonotope can represent non-convex sets with flat faces. And the reachability analysis based on hybrid zonotopes will lead to the formulation of mixed-integer linear programs (MILPs), for which many state-of-the-art solvers such as Gurobi \cite{gurobi} and learning-based solver MLOPT \cite{bertsimas2022online} can be utilized to accelerate the computation.

\begin{figure}[t]
    \centering
        \includegraphics[width=0.47\textwidth]{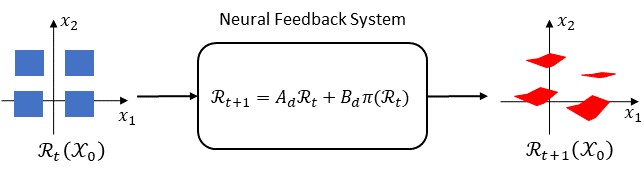}
     \caption{The neural feedback system is given as $\x{(t+1)} = A_d\x(t) + B_d \u(t)$ where the state feedback controller $\u(t) = \pi(\x(t))$ is a given $\ell$-layer FNN with the ReLU activation function. At each time step, the neural feedback system maps a hybrid zonotope as the input set to another hybrid zonotope as the output set. The initial set is $\mathcal{X}_0$ and the reachable set at time $t$ from $\mathcal{X}_0$ is $\mathcal{R}_t(\mathcal{X}_0)$.} 
      \label{fig:feedback-loop}
     \vspace{-0.3cm}
\end{figure}

In this work, we present hybrid zonotope-based methods for reachability analysis and safety verification of neural feedback systems with ReLU-activated feed-forward neural network (FNN) controllers (see Figure \ref{fig:feedback-loop}). 
The contributions of this paper are threefold: (i) For neural feedback systems with hybrid zonotopes as the input sets, a novel approach is presented to compute the nonconvex exact reachable sets represented as hybrid zonotopes; (ii) Based on the convex relaxation property of the computed reachable sets and the properties of hybrid zonotopes, heuristic reduction methods are proposed to reduce the complexity growth of the hybrid zonotope sets;  (iii) Using the computed reachable sets, an MILP-based condition is provided to verify the unsafe region avoidance of neural feedback systems, for which off-the-shelf solvers can be employed. 
The efficiency of the proposed methods is demonstrated through two numerical examples.

\section{Preliminaries \& Problem Statement}\label{sec:pre}


\subsection{Hybrid Zonotopes}\label{sec:zono}


\begin{definition}\label{def:sets}
Let $\Zc,\Zc_c,\Zc_h \subset\mathbb{R}^n$. $\Zc$ is a \emph{zonotope} if \eqref{equ:zono} holds \cite{mcmullen1971zonotopes}, $\Zc_c$ is a \emph{constrained zonotope} if \eqref{equ:czono} holds \cite{scott2016constrained}, and $\Zc_h$ is a \emph{hybrid zonotope} if \eqref{equ:hzono} holds \cite{bird2021hybrid}:
\begin{align}
    &\exists (\mathbf{c}, \mathbf{G}) \in  \mathbb{R}^{n}\times \mathbb{R}^{n \times n_{g}}: \!\Zc =\left\{\mathbf{G} \bm{\xi}+\mathbf{c} \;|\; \|\bm{\xi}\|_{\infty} \leq 1\right\}, \label{equ:zono}\\
    &\exists (\mathbf{c}, \mathbf{G},\A,\bb) \in  \mathbb{R}^{n}\times \mathbb{R}^{n \times n_{g}}\times \mathbb{R}^{n_{c} \times n_{g}} \times \mathbb{R}^{n_{c}}: \notag\\
    &  \; \quad\quad\quad\quad\quad\quad\Zc_c=\left\{\mathbf{G} \bm{\xi}+\mathbf{c} \;|\; \|\bm{\xi}\|_{\infty} \leq 1, \mathbf{A} \bm{\xi}=\mathbf{b}\right\},\label{equ:czono}\\
    &\exists (\mathbf{c}, \mathbf{G}^c,\G^b,\A^c,\A^b,\bb) \in  \mathbb{R}^{n}\!\times\! \mathbb{R}^{n \times n_{g}}\!\times\! \mathbb{R}^{n \times n_{b}} \!\times\! \mathbb{R}^{n_{c}\times{n_g}}\notag \\
    & \!\quad\quad\quad\quad\quad\quad\quad\quad\quad \quad\quad\times \mathbb{R}^{n_{c}\times{n_b}}\times \mathbb{R}^{n_{c}}: \label{equ:hzono}\\ \notag
    & \mathcal{Z}_{h}\!=\!\left\{\mat{\G^c \!\! & \!\!\G^b}\mat{\bm{\xi}^c\\ \bm{\xi}^b}+\cc \left |\!\! \begin{array}{c}
{\mat{\bm{\xi}^c\\ \bm{\xi}^b} \in \mathcal{B}_{\infty}^{n_{g}} \times\{-1,1\}^{n_{b}}}, \\
{\mat{\A^c \!\!& \!\!\A^b}\mat{\bm{\xi}^c\\ \bm{\xi}^b}=\bb}
\end{array}\right.\!\!\!\right\},
\end{align}
where $\mathcal{B}_{\infty}^{n_g}=\left\{\bm{x} \in \mathbb{R}^{n_g} \;|\;\|\bm x\|_{\infty} \leq 1\right\}$ is the unit hypercube in $\mathbb{R}^{n_{g}}$. The shorthand notations of the zonotope, constrained zonotope and hybrid zonotope  are given by $\Zc = Z\langle \cc, \G\rangle$, $\mathcal{Z}_c= CZ\langle \mathbf{c}, \mathbf{G}, \mathbf{A}, \mathbf{b}\rangle$, and $\mathcal{Z}_{h}= HZ\langle \mathbf{c}, \mathbf{G}^c, \mathbf{G}^b, \mathbf{A}^c, \mathbf{A}^b, \mathbf{b}\rangle$, respectively. 
\end{definition}

Note that a hybrid zonotope degenerates into a constrained zonotope when $n_b=0$, and a constrained zonotope degenerates into a zonotope when $n_c=0$. For a given hybrid zonotope $HZ\langle \mathbf{c}, \mathbf{G}^c, \mathbf{G}^b, \mathbf{A}^c, \mathbf{A}^b, \mathbf{b}\rangle$, the vector $\cc$ is called the \emph{center}, the columns of $\G^b$ are called the \emph{binary generators}, and the columns of $\G^c$ are called the \emph{continuous generators} (or simply \emph{generators} if binary generators are not present). For simplicity, we define the set $\mathcal{B}(\A^c,\A^b,\bb) = \{(\bm{\xi}^c,\bm{\xi}^b) \in \mathcal{B}^{n_g}_\infty \times \{-1,1\}^{n_b} \;|\; \A^c\bm{\xi}^c + \A^b\bm{\xi}^b = \bb \}$. We denote $\G[:,i]$ as the $i$-th column of a matrix $\G$. The complexity of a hybrid zonotope is described by its \emph{degrees-of freedom order} or simply \emph{order} $o_h = (n_g+n_b-n_c)/n$. 
The equivalence of a hybrid zonotope with a finite collection of constrained zonotopes is stated by the result below.

\begin{lemma}\label{lemma:hz-cz}
\cite[Theorem 5]{bird2021hybrid} The set $\mathcal{Z}_h\subset \mathbb{R}^n$ is a hybrid zonotope if and only if it is the union of a finite number of constrained zonotopes.
\end{lemma}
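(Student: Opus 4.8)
The statement is an ``if and only if'', so I would treat the two implications separately; the forward direction is routine, and the converse carries essentially all of the work. For ($\Rightarrow$), starting from $\mathcal{Z}_h = HZ\langle \mathbf{c}, \mathbf{G}^c, \mathbf{G}^b, \mathbf{A}^c, \mathbf{A}^b, \mathbf{b}\rangle$ I would enumerate the $2^{n_b}$ possible values of the binary vector: fixing $\bm{\xi}^b = \hat{\bm{\xi}}^b \in \{-1,1\}^{n_b}$, the set defined by \eqref{equ:hzono} collapses to $\{\mathbf{G}^c\bm{\xi}^c + (\mathbf{c}+\mathbf{G}^b\hat{\bm{\xi}}^b) \mid \|\bm{\xi}^c\|_\infty \le 1,\ \mathbf{A}^c\bm{\xi}^c = \mathbf{b}-\mathbf{A}^b\hat{\bm{\xi}}^b\}$, which is exactly the constrained zonotope $CZ\langle \mathbf{c}+\mathbf{G}^b\hat{\bm{\xi}}^b,\, \mathbf{G}^c,\, \mathbf{A}^c,\, \mathbf{b}-\mathbf{A}^b\hat{\bm{\xi}}^b\rangle$ (empty when the shifted equality is infeasible). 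Taking the union over all $\hat{\bm{\xi}}^b$ exhibits $\mathcal{Z}_h$ as the union of at most $2^{n_b}$ constrained zonotopes.

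For ($\Leftarrow$) I would give an explicit construction. Let $\mathcal{Z}_i = CZ\langle \mathbf{c}_i, \mathbf{G}_i, \mathbf{A}_i, \mathbf{b}_i\rangle$ with $\mathbf{G}_i \in \mathbb{R}^{n\times n_i}$ for $i=1,\dots,N$, and write $\mathbf{1}$ for an all-ones vector of the size implied by context. Introduce $N$ binary generators $\bm{\xi}^b=(\xi^b_1,\dots,\xi^b_N)$ subject to the single equality $\sum_i \xi^b_i = 2-N$; over $\{-1,1\}^N$ this forces exactly one $\xi^b_i=1$ (the ``active'' index) and the rest $-1$, so with $s_i := (1+\xi^b_i)/2 \in \{0,1\}$ exactly one $s_i$ equals $1$. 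For the continuous part, stack the generators, $\mathbf{G}^c = [\,\mathbf{G}_1\ \cdots\ \mathbf{G}_N\ \mathbf{0}\,]$ with coefficient blocks $\bm{\xi}_i \in [-1,1]^{n_i}$, and for every coordinate $k$ of every block $i$ append one auxiliary ``trimming'' continuous generator with a zero column in $\mathbf{G}^c$, whose coefficients are collected into $\bm{\mu}_i \in [-1,1]^{n_i}$. Impose, for each $i$, the scaled feasibility constraint $\mathbf{A}_i\bm{\xi}_i = s_i\,\mathbf{b}_i + (1-s_i)\,\mathbf{A}_i\mathbf{1}$ and the pinning constraint $\bm{\xi}_i + \bm{\mu}_i = 2(1-s_i)\mathbf{1}$; all right-hand sides are affine in $\bm{\xi}^b$, so together these form a valid hybrid-zonotope constraint system. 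On the leaf where $i$ is active the first constraint reads $\mathbf{A}_i\bm{\xi}_i=\mathbf{b}_i$ and the second reads $\bm{\xi}_i=-\bm{\mu}_i$, so $\bm{\xi}_i$ ranges freely over $[-1,1]^{n_i}$; on every other leaf the first reads $\mathbf{A}_i\bm{\xi}_i=\mathbf{A}_i\mathbf{1}$ and the second forces $\bm{\xi}_i=\bm{\mu}_i=\mathbf{1}$. Finally, choose the center $\mathbf{c}$ and the binary generator matrix $\mathbf{G}^b$ so that on the leaf where $i$ is active the constant contribution $\sum_{j\ne i}\mathbf{G}_j\mathbf{1}$ of the pinned blocks is cancelled, i.e.\ so that $\mathbf{c}+\mathbf{G}^b(2\mathbf{e}_i-\mathbf{1}) + \sum_{j\ne i}\mathbf{G}_j\mathbf{1} = \mathbf{c}_i$ for every $i$ (here $2\mathbf{e}_i-\mathbf{1}$ is the value of $\bm{\xi}^b$ on that leaf); this is a solvable linear system in $(\mathbf{c},\mathbf{G}^b)$. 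The resulting hybrid zonotope then has, on the leaf where $i$ is active, the set $\{\mathbf{c}_i+\mathbf{G}_i\bm{\xi}_i \mid \bm{\xi}_i\in[-1,1]^{n_i},\ \mathbf{A}_i\bm{\xi}_i=\mathbf{b}_i\} = \mathcal{Z}_i$, and since these $N$ are the only feasible binary assignments, it equals $\bigcup_{i=1}^N \mathcal{Z}_i$. (The case $N=1$ is immediate, since a constrained zonotope is a hybrid zonotope with $n_b=0$, and empty $\mathcal{Z}_i$ simply contribute empty leaves.)

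The one genuinely delicate point — which I would flag as the main obstacle — is that one cannot simply ``switch off'' the generators of the inactive blocks by setting their coefficients to $0$: forcing a continuous coefficient to an interior value is impossible with linear equalities, because the feasible set $\{\bm{\xi}^c : \|\bm{\xi}^c\|_\infty\le 1,\ \mathbf{A}^c\bm{\xi}^c=\mathbf{r}\}$ projects onto an interval in each coordinate, so equalities can only carve out sub-boxes, never isolate an interior point. The trimming generators circumvent this by pinning each inactive coefficient to the \emph{boundary} value $1$ (possible because $\xi+\mu=2$ with $\xi,\mu\in[-1,1]$ forces $\xi=\mu=1$) and then absorbing the resulting constant, block-dependent offset through the binary generators and the center. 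Checking that this bookkeeping is consistent on every leaf — in particular that the scaled feasibility constraints remain compatible with the pinned values on the inactive leaves — is where the care is needed; the rest is routine linear algebra.
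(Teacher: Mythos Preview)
The paper does not prove this lemma; it is imported verbatim from \cite[Theorem~5]{bird2021hybrid} without argument, so there is no in-paper proof to compare against directly.

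Your proof is correct. The forward direction is the standard decomposition into $2^{n_b}$ constrained zonotopes. For the converse, your selector constraint $\sum_i \xi^b_i = 2-N$, the trimming generators that pin each inactive block to the vertex $\mathbf{1}$ via $\bm{\xi}_i+\bm{\mu}_i=2(1-s_i)\mathbf{1}$, and the absorption of the resulting constant offsets into $(\mathbf{c},\mathbf{G}^b)$ all work as stated; the linear system $\mathbf{c}+\mathbf{G}^b(2\mathbf{e}_i-\mathbf{1})+\sum_{j\neq i}\mathbf{G}_j\mathbf{1}=\mathbf{c}_i$ has $nN$ scalar equations in $n(N+1)$ unknowns and is consistent (sum the equations to see that any obstruction is removed by an appropriate choice of $\mathbf{c}$). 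Your diagnosis of the main obstacle --- that linear equalities over the box cannot isolate an interior coordinate value, so one must pin to a boundary vertex and then cancel the induced shift --- is exactly the point.

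A shorter route for $(\Leftarrow)$, and the one most aligned with the paper's own toolkit, is to invoke Lemma~\ref{lemma:hz-union} inductively: a constrained zonotope is a hybrid zonotope with $n_b=0$, and the union of two hybrid zonotopes is again a hybrid zonotope by that explicit formula, so any finite union is a hybrid zonotope by induction on $N$. This buys brevity but yields a nested representation whose generator and constraint counts compound with each pairwise union; your direct construction is more parsimonious, using only $N$ binary generators and a single selector equality regardless of $N$.
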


Similar to constrained zonotopes, hybrid zonotopes are closed under linear map and intersections.

\begin{lemma}\cite[Proposition  7]{bird2021hybrid}\label{lemma:set-op}
For any $\mathbf R \in \mathbb{R}^{m \times n}$, $\mathcal{Z}_{h}=HZ\langle \cc_{z},\G_{z}^{c}, \G_{z}^{b},  \A_{z}^{c}, \A_{z}^{b}, \bb_{z}\rangle \subset \mathbb{R}^{n}$, $\mathcal{Y}_{h}=HZ\langle \cc_y, \G_{y}^{c},$ $\G_{y}^{b}, \A_{y}^{c}, \A_{y}^{b}, \bb_{y}\rangle \subset \mathbb{R}^{n}$, and $\mathcal{H}_{-}=\{\bm x \in \mathbb{R}^{m} | \bm h^{T} \bm x \leq f\} \subset \mathbb{R}^{n}$ the following identities hold:
\begin{align*}
\mathbf{R}\Zc_h =HZ & \langle \mathbf{R}\mathbf{c}_{z},\mathbf{R}\mathbf{G}^c_{z}, \mathbf{R}\mathbf{G}^b_{z}, \mathbf{A}^c_{z}, \mathbf{A}^b_{z}, \mathbf{b}_{z}\rangle,\\
\mathcal{Z}_{h} \cap \mathcal{Y}_{h}=HZ & \langle\cc_{z},\mat{\G_{z}^{c} & \mathbf{0} },\mat{\G_{z}^{b} & \mathbf{0} }, \\
&\!\!\!\! \mat{\A_{z}^{c} & \mathbf{0} \\ \mathbf{0} & \A_{y}^{c} \\ \G_{z}^{c} & -\G_{y}^{c} } ,\mat{\A_{z}^{b} & \mathbf{0} \\\mathbf{0} & \A_{y}^{b} \\ \G_{z}^{b} & -\G_{y}^{b} },
\mat{\bb_{z} \\
\bb_{y} \\
\cc_{y}-\cc_{z} }\rangle,\\
\mathcal{Z}_{h} \cap\mathcal{H}_{-}=HZ & \langle\cc_{z},\mat{\G_{z}^{c} & \mathbf{0} }, \G_{z}^{b}, 
\mat{\A_{z}^{c} & \mathbf{0} \\ \bm{h}^{T} \G_{z}^{c} & \frac{d_{m}}{2} },\\
& \mat{ \A_{z}^{b} \\ \bm h^{T} \G_{z}^{b} },
\mat{ \bb_{z} \\ f-\bm{h}^{T} \cc_{z}-\frac{d_{m}}{2} }\rangle,
\end{align*}
where $d_{m}=\sum_{i=1}^{n_{g, z}}\left|\bm{h}^{T} \G_{z}^{c}[:,i]\right|+\sum_{i=1}^{n_{b, z}}\left|\bm{h}^{T} \G_{z}^{b}[:,i]\right|+f-\bm{h}^{T}  \cc_{z}$.
\end{lemma}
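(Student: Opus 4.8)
Each of the three identities will be established by a direct set-membership argument: read off both parametrizations from Definition~\ref{def:sets} and show that every point of one side is produced by an admissible choice of factors on the other. The linear-map identity is immediate: $\bm{y}\in\mathbf{R}\Zc_h$ iff $\bm{y}=\mathbf{R}\bigl(\mat{\G_z^c & \G_z^b}\mat{\bm{\xi}^c\\ \bm{\xi}^b}+\cc_z\bigr)$ for some $(\bm{\xi}^c,\bm{\xi}^b)\in\mathcal{B}(\A_z^c,\A_z^b,\bb_z)$, and since $\mathbf{R}$ multiplies only the generators and the center while the feasible set of factors is left untouched, this is exactly membership in $HZ\langle\mathbf{R}\cc_z,\mathbf{R}\G_z^c,\mathbf{R}\G_z^b,\A_z^c,\A_z^b,\bb_z\rangle$.

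For $\Zc_h\cap\mathcal{Y}_h$ I would introduce stacked factor vectors $\bm{\xi}^c=(\bm{\xi}_z^c,\bm{\xi}_y^c)$ and $\bm{\xi}^b=(\bm{\xi}_z^b,\bm{\xi}_y^b)$ and observe that the claimed constraint matrix splits into three row blocks with transparent meaning: the first two reproduce the defining equality constraints of $\Zc_h$ and of $\mathcal{Y}_h$ on the respective sub-vectors, while the third, $\G_z^c\bm{\xi}_z^c-\G_y^c\bm{\xi}_y^c+\G_z^b\bm{\xi}_z^b-\G_y^b\bm{\xi}_y^b=\cc_y-\cc_z$, forces the point built from $\Zc_h$'s data to coincide with the point built from $\mathcal{Y}_h$'s data. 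Because the new generators attach only to $\bm{\xi}_z$ (the columns multiplying $\bm{\xi}_y$ are zero) and $(\bm{\xi}_z^b,\bm{\xi}_y^b)$ ranges over $\{-1,1\}^{n_{b,z}+n_{b,y}}$, a point $\x$ lies in the right-hand side iff there exist admissible factors with $\x=\G_z^c\bm{\xi}_z^c+\G_z^b\bm{\xi}_z^b+\cc_z\in\Zc_h$ and simultaneously $\x=\G_y^c\bm{\xi}_y^c+\G_y^b\bm{\xi}_y^b+\cc_y\in\mathcal{Y}_h$, i.e.\ iff $\x\in\Zc_h\cap\mathcal{Y}_h$.

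The half-space identity is the crux. A point $\x=\G_z^c\bm{\xi}^c+\G_z^b\bm{\xi}^b+\cc_z$ with $(\bm{\xi}^c,\bm{\xi}^b)\in\mathcal{B}(\A_z^c,\A_z^b,\bb_z)$ lies in $\mathcal{H}_{-}$ iff $\bm{h}^T\G_z^c\bm{\xi}^c+\bm{h}^T\G_z^b\bm{\xi}^b\le f-\bm{h}^T\cc_z$, which is an \emph{inequality} on the factors and must be converted into an equality to match the hybrid-zonotope template. The device is to append one continuous factor $\xi_m\in[-1,1]$ carrying a zero generator column (so $\x$ is unchanged) together with the extra constraint $\bm{h}^T\G_z^c\bm{\xi}^c+\bm{h}^T\G_z^b\bm{\xi}^b+\tfrac{d_m}{2}\xi_m=f-\bm{h}^T\cc_z-\tfrac{d_m}{2}$ --- exactly the new row of the stated matrices and right-hand side. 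The key step is to show that, for any $(\bm{\xi}^c,\bm{\xi}^b)$ satisfying $\A_z^c\bm{\xi}^c+\A_z^b\bm{\xi}^b=\bb_z$, this constraint admits a solution $\xi_m\in[-1,1]$ iff $\bm{h}^T\G_z^c\bm{\xi}^c+\bm{h}^T\G_z^b\bm{\xi}^b\le f-\bm{h}^T\cc_z$: solving for $\xi_m$, the bound $\xi_m\ge-1$ is \emph{equivalent} to this inequality, whereas $\xi_m\le1$ holds \emph{automatically} because on $\mathcal{B}_{\infty}^{n_{g,z}}\times\{-1,1\}^{n_{b,z}}$ the triangle inequality yields $\bm{h}^T\G_z^c\bm{\xi}^c+\bm{h}^T\G_z^b\bm{\xi}^b\ge-\bigl(\sum_i|\bm{h}^T\G_z^c[:,i]|+\sum_i|\bm{h}^T\G_z^b[:,i]|\bigr)=f-\bm{h}^T\cc_z-d_m$, and this lower bound is precisely what the definition of $d_m$ is engineered to give. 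Hence the appended hybrid zonotope contains exactly the $\x\in\Zc_h$ with $\bm{h}^T\x\le f$, i.e.\ it equals $\Zc_h\cap\mathcal{H}_{-}$.

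The only genuine obstacle I anticipate is this last equivalence --- specifically, recognizing that the opaque constant $d_m$ is chosen so that the upper bound $\xi_m\le1$ never binds, which is what makes the lifting \emph{exact} rather than an over-approximation; the remainder is dimension bookkeeping in the block matrices, together with the degenerate cases $d_m\le0$ handled separately (when $d_m=0$ the appended row reads $\bm{h}^T\G_z^c\bm{\xi}^c+\bm{h}^T\G_z^b\bm{\xi}^b=f-\bm{h}^T\cc_z$ with $\xi_m$ an unused free factor, and when $d_m<0$ one checks directly that $\Zc_h\cap\mathcal{H}_{-}=\emptyset$). As an independent sanity check one could instead decompose $\Zc_h$ into a finite union of constrained zonotopes via Lemma~\ref{lemma:hz-cz}, invoke the known constrained-zonotope identities, and reassemble, using that linear maps and intersections distribute over unions --- but the direct argument above is what yields the explicit uniform formulas in the statement.
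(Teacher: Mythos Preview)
The paper does not supply its own proof of this lemma --- it is quoted verbatim as Proposition~7 of \cite{bird2021hybrid} and left unproved. Your direct set-membership argument is correct and is the standard verification: the linear-map and intersection identities fall out immediately from unpacking Definition~\ref{def:sets}, and for the half-space case you have correctly isolated the one nontrivial point, namely that $d_m$ is chosen so that the slack factor's upper bound $\xi_m\le1$ is automatically satisfied on $\mathcal{B}_{\infty}^{n_{g,z}}\times\{-1,1\}^{n_{b,z}}$ while $\xi_m\ge-1$ is \emph{equivalent} to $\bm{h}^T\x\le f$.
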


Hybrid zonotopes are also closed under union operation.
\begin{lemma}\label{lemma:hz-union}
\cite[Proposition 1]{bird2021unions} (Union)
For any $\mathcal{Z}_h=HZ\left\langle \cc_z, \G_z^c, \G_z^b, \A_z^c, \A_z^b, \bb_z\right\rangle \subset \mathbb{R}^n$ and $\mathcal{W}_h=HZ\left\langle \cc_w,\G_w^c, \G_w^b,  \A_w^c, \A_w^b, \bb_w\right\rangle \subset \mathbb{R}^n$, define the vectors $\hat{\G}^b \in \mathbb{R}^n, \hat{\cc} \in \mathbb{R}^n, \hat{\A}_z^b \in \mathbb{R}^{n_{c, z}}, \hat{\bb}_z \in \mathbb{R}^{n_{c, z}}, \hat{\A}_w^b \in \mathbb{R}^{n_{c, w}}$, and $\hat{\bb}_w \in \mathbb{R}^{n_{c, w}}$, such that
$\hat{\G}^b = \frac{(\G_w^b \mathbf{1}+\cc_z)-(\G_z^b \mathbf{1}+\cc_w)}{2},\hat{\A}_z^b = \frac{-\A_z^b \mathbf{1}-\bb_z }{2},\hat{\A}_w^b = \frac{\A_w^b \mathbf{1}+\bb_w }{2},
\hat{\cc} = \frac{(\G_w^b \mathbf{1}+\cc_z)+(\G_z^b \mathbf{1}+\cc_w)}{2},
\hat{\bb}_z = \frac{-\A_z^b \mathbf{1}+\bb_z }{2},\hat{\bb}_w = \frac{-\A_w^b \mathbf{1}+\bb_w }{2}.
$
Then the union of $\mathcal{Z}_h$ and $\mathcal{W}_h$ is a hybrid zonotope $\mathcal{Z}_h \cup$ $\mathcal{W}_h=HZ\left\langle \cc_u,\G_u^c, \G_u^b,  \A_u^c, \A_u^b, \bb_u\right\rangle \subset \mathbb{R}^n$ where
\begin{align*}
\G_u^c &=\mat{
\G_z^c & \G_w^c & \mathbf{0}
}, \G_u^b=\mat{
\G_z^b & \G_w^b & \hat{\G}^b
}, \\
\A_u^c &=\mat{
\A_z^c & \mathbf{0} & \mathbf{0} \\
\mathbf{0} & \A_w^c & \mathbf{0} \\
\multicolumn{2}{c}{\A_{3}^{c}} & I
}, \A_u^b=\mat{
\A_z^b & \mathbf{0} & \hat{\A}_z^b \\
\mathbf{0} & \A_w^b & \hat{\A}_w^b \\
& \A_3^b &
},\\
\A_3^c & =\mat{
I & \mathbf{0} \\
-I & \mathbf{0} \\
\mathbf{0} & I \\
\mathbf{0} & -I \\
\mathbf{0} & \mathbf{0} \\
\mathbf{0} & \mathbf{0} \\
\mathbf{0} & \mathbf{0} \\
\mathbf{0} & \mathbf{0}
}, \A_3^b=\mat{
\mathbf{0} & \mathbf{0} & \frac{1}{2} \mathbf{1} \\
\mathbf{0} & \mathbf{0} & \frac{1}{2} \mathbf{1} \\
\mathbf{0} & \mathbf{0} & -\frac{1}{2} \mathbf{1} \\
\mathbf{0} & \mathbf{0} & -\frac{1}{2} \mathbf{1} \\
\frac{1}{2} I & \mathbf{0} & \frac{1}{2} \mathbf{1} \\
-\frac{1}{2} I & \mathbf{0} & \frac{1}{2} \mathbf{1} \\
\mathbf{0} & \frac{1}{2} I & -\frac{1}{2} \mathbf{1} \\
\mathbf{0} & -\frac{1}{2} I & -\frac{1}{2} \mathbf{1}
},\\
\cc_u&=\hat{\cc}, 
\bb_u=\mat{
\hat{\bb}_z^T &
\hat{\bb}_w^T &
\bb_3^T
}^{T},\\ \bb_3&=\mat{
\frac{1}{2} \mathbf{1}^T &
\frac{1}{2} \mathbf{1}^T &
\frac{1}{2} \mathbf{1}^T &
\frac{1}{2} \mathbf{1}^T &
\mathbf{0}^T &
\mathbf{1}^T &
\mathbf{0}^T &
\mathbf{1}^T
}^{T}.
\end{align*}
\end{lemma}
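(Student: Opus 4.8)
The plan is to establish the identity by a case split on the binary factor $\xi_0\in\{-1,1\}$ multiplying the new binary generator $\hat{\G}^b$: I will show that fixing $\xi_0=1$ makes the constructed hybrid zonotope reproduce $\mathcal Z_h$ exactly and $\xi_0=-1$ makes it reproduce $\mathcal W_h$ exactly, so that ranging over both values yields $HZ\langle\cc_u,\G_u^c,\G_u^b,\A_u^c,\A_u^b,\bb_u\rangle=\mathcal Z_h\cup\mathcal W_h$. The reason the construction is not simply ``take the convex combination $\tfrac{1+\xi_0}{2}\z+\tfrac{1-\xi_0}{2}\bm w$'' is that a hybrid zonotope is affine in its factors, so one cannot multiply factors together; instead all of $\G_z^c,\G_w^c,\G_z^b,\G_w^b$ are kept present for both values of $\xi_0$, and the auxiliary continuous generators (the zero columns of $\G_u^c$, with coefficient vector $\bm\eta$, $\|\bm\eta\|_\infty\le1$) together with the block $(\A_3^c,\A_3^b,\bb_3)$ are used to ``freeze'' the factors of the inactive summand at a fixed vertex.

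The first step is to eliminate $\bm\eta$ from the third block row of $(\A_u^c,\A_u^b,\bb_u)$; since $\bm\eta$ enters with coefficient $I$ it is uniquely determined, and the eight sub-blocks give $\bm\eta$ as an explicit affine function of $(\bm\xi_z^c,\bm\xi_w^c,\bm\xi_z^b,\bm\xi_w^b,\xi_0)$. Running the case $\xi_0=\pm1$ against $\|\bm\eta\|_\infty\le1$: the two sub-blocks where $\A_3^c$ restricts to $\pm I$ on the $\bm\xi_z^c$-coordinates sandwich $\bm\xi_z^c$ and, only when $\xi_0=-1$, force $\bm\xi_z^c=\mathbf 0$; the two $\pm I$ sub-blocks on the $\bm\xi_w^c$-coordinates force $\bm\xi_w^c=\mathbf 0$ only when $\xi_0=1$; and the sub-blocks carrying $\tfrac12 I$ and $\tfrac12\mathbf 1$, using the integrality of $\bm\xi_z^b,\bm\xi_w^b$, force $\bm\xi_z^b=-\mathbf 1$ only when $\xi_0=-1$ and $\bm\xi_w^b=-\mathbf 1$ only when $\xi_0=1$. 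In the opposite case each conditional equality is slack, so the factors of the active summand are left unconstrained by this block.

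Next I would specialize the first two block rows $\A_z^c\bm\xi_z^c+\A_z^b\bm\xi_z^b+\hat{\A}_z^b\xi_0=\hat{\bb}_z$ and $\A_w^c\bm\xi_w^c+\A_w^b\bm\xi_w^b+\hat{\A}_w^b\xi_0=\hat{\bb}_w$. Plugging in $\hat{\A}_z^b=\tfrac12(-\A_z^b\mathbf 1-\bb_z)$, $\hat{\bb}_z=\tfrac12(-\A_z^b\mathbf 1+\bb_z)$, the value $\xi_0=1$ collapses the first to $\A_z^c\bm\xi_z^c+\A_z^b\bm\xi_z^b=\bb_z$, the defining equality of $\mathcal Z_h$, while $\xi_0=-1$ gives $\A_z^c\bm\xi_z^c+\A_z^b\bm\xi_z^b=-\A_z^b\mathbf 1$, which is automatic at the frozen $\bm\xi_z^c=\mathbf 0$, $\bm\xi_z^b=-\mathbf 1$ (symmetrically for $w$). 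Then I would evaluate the point $\x=\G_z^c\bm\xi_z^c+\G_w^c\bm\xi_w^c+\G_z^b\bm\xi_z^b+\G_w^b\bm\xi_w^b+\hat{\G}^b\xi_0+\hat{\cc}$ using $\hat{\G}^b+\hat{\cc}=\G_w^b\mathbf 1+\cc_z$ and $\hat{\cc}-\hat{\G}^b=\G_z^b\mathbf 1+\cc_w$: for $\xi_0=1$ the $w$-terms cancel and $\x=\G_z^c\bm\xi_z^c+\G_z^b\bm\xi_z^b+\cc_z$ with $(\bm\xi_z^c,\bm\xi_z^b)\in\mathcal B(\A_z^c,\A_z^b,\bb_z)$, i.e.\ $\x$ ranges over $\mathcal Z_h$; for $\xi_0=-1$ the mirror computation gives $\x\in\mathcal W_h$. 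The reverse inclusion is the same computation read backwards: given $\x\in\mathcal Z_h$ realized by $(\bm\xi_z^c,\bm\xi_z^b)$, set $\xi_0=1$, $\bm\xi_w^c=\mathbf 0$, $\bm\xi_w^b=-\mathbf 1$ and the forced $\bm\eta$, whose feasibility (including $\|\bm\eta\|_\infty\le1$) is exactly what was checked above.

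I expect the main obstacle to be bookkeeping rather than anything conceptual: keeping the block dimensions consistent (the auxiliary generator count is $2(n_{g,z}+n_{g,w}+n_{b,z}+n_{b,w})$, the number of row-blocks of $\A_3^c$), and verifying in each case that the unit-box constraints bite on the inactive summand only --- i.e.\ that every ``wrong-$\xi_0$'' branch of the conditional equalities is genuinely slack and that the frozen vertex is consistent with the corresponding $\hat{\A},\hat{\bb}$ equality. Lemma~\ref{lemma:hz-cz} already guarantees a priori that $\mathcal Z_h\cup\mathcal W_h$ is \emph{some} hybrid zonotope, but it does not yield the explicit data in the statement, so the case analysis above appears to be the direct route.
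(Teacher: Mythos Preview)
Your proposal is sound, but note that the paper does \emph{not} supply a proof of this lemma: it is quoted verbatim as \cite[Proposition~1]{bird2021unions} and used as a black box throughout (in Algorithm~\ref{alg:exact}, Theorem~\ref{thm:sum}, and Lemma~\ref{lemma:CH_relax}). So there is no ``paper's own proof'' to compare against here.

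That said, your case split on the extra binary factor $\xi_0\in\{-1,1\}$ is exactly the intended mechanism of the construction, and your verification is correct in outline and in the details you spell out: the identities $\hat{\G}^b+\hat{\cc}=\G_w^b\mathbf{1}+\cc_z$ and $\hat{\cc}-\hat{\G}^b=\G_z^b\mathbf{1}+\cc_w$ are precisely what make the inactive summand's contribution cancel; the reduction of the first two block rows at $\xi_0=\pm1$ to the native equalities of $\mathcal Z_h$ resp.\ $\mathcal W_h$ (and to the tautology at the frozen vertex in the opposite case) is right; and the $(\A_3^c,\A_3^b,\bb_3)$ block, once $\bm\eta$ is eliminated, does force $\bm\xi_z^c=\mathbf 0$, $\bm\xi_z^b=-\mathbf 1$ when $\xi_0=-1$ and symmetrically for $w$ when $\xi_0=1$, while leaving the active factors free. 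The reverse inclusion by explicitly setting the inactive factors to their frozen values and checking $\|\bm\eta\|_\infty\le1$ is the right way to close the argument. Your anticipated obstacle---bookkeeping rather than concept---is an accurate assessment.
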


The emptiness of a hybrid zonotope can be checked by solving an MILP.
\begin{lemma}\label{lemma:empty} \cite{bird2021hybrid}
Given $\mathcal{Z}_h = HZ\langle \mathbf{c}, \mathbf{G}^c, \mathbf{G}^b, \mathbf{A}^c, \mathbf{A}^b, \mathbf{b}\rangle$ $\subset \mathbb{R}^n$, $\mathcal{Z}_h \not = \emptyset$ if and only if  $\min\{||\bm{\xi}^c||_{\infty} \;|\; \A^c\bm{\xi}^c + \A^b\bm{\xi}^b  = \bb, \bm{\xi}^c\in \mathbb{R}^{n_g}, \bm{\xi}^b\in\{-1,1\}^{n_b}\} \leq 1.
$
\end{lemma}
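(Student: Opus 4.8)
The plan is to prove both implications directly by unpacking Definition~\ref{def:sets}, the essential observation being that the box constraint $\|\bm\xi^c\|_\infty \le 1$ appearing in the set $\mathcal{B}(\A^c,\A^b,\bb)$ has simply been moved from the feasible region into the objective of the optimization problem.

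For the ``only if'' direction, I would assume $\mathcal{Z}_h \neq \emptyset$, pick any $\bm z \in \mathcal{Z}_h$, and use \eqref{equ:hzono} to extract $\bm\xi^c \in \mathcal{B}_\infty^{n_g}$ and $\bm\xi^b \in \{-1,1\}^{n_b}$ with $\A^c\bm\xi^c + \A^b\bm\xi^b = \bb$ and $\bm z = \G^c\bm\xi^c + \G^b\bm\xi^b + \cc$. This pair is feasible for the program, and its cost $\|\bm\xi^c\|_\infty$ is at most $1$ because $\bm\xi^c \in \mathcal{B}_\infty^{n_g}$; hence the optimal value is at most $1$.

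For the ``if'' direction, I would assume the optimal value is at most $1$. Then the feasible set is nonempty and contains a pair $(\bm\xi^c,\bm\xi^b)$ with $\bm\xi^c\in\mathbb{R}^{n_g}$, $\bm\xi^b\in\{-1,1\}^{n_b}$, $\A^c\bm\xi^c + \A^b\bm\xi^b = \bb$, and $\|\bm\xi^c\|_\infty \le 1$ (an optimizer, or any feasible point of cost $\le 1$). These are exactly the conditions for $(\bm\xi^c,\bm\xi^b) \in \mathcal{B}(\A^c,\A^b,\bb)$, so $\G^c\bm\xi^c + \G^b\bm\xi^b + \cc \in \mathcal{Z}_h$ by \eqref{equ:hzono}, and therefore $\mathcal{Z}_h \neq \emptyset$.

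The argument is mostly bookkeeping, and I expect the only points needing care to be: (i) noting that, after introducing an epigraph variable $t$ together with $-t\mathbf{1} \le \bm\xi^c \le t\mathbf{1}$, the problem is genuinely a mixed-integer linear program, which justifies the ``MILP'' claim in the statement; and (ii) checking that ``$\min$'' is well posed — for each of the finitely many binary assignments $\bm\xi^b$, the set of $\bm\xi^c$ with $\A^c\bm\xi^c = \bb - \A^b\bm\xi^b$ and $\|\bm\xi^c\|_\infty \le r$ is compact, so the infimum over each slice (and hence overall) is attained when feasible, while if no assignment yields a feasible slice the value is $+\infty > 1$, which is consistent with $\mathcal{Z}_h = \emptyset$. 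An alternative route would be to use Lemma~\ref{lemma:hz-cz} to express $\mathcal{Z}_h$ as the union of the constrained zonotopes indexed by the feasible binary assignments and apply the standard constrained-zonotope emptiness test to each, but the direct argument above is shorter and self-contained.
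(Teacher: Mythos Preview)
Your argument is correct. Note, however, that the paper does not actually prove this lemma: it is quoted from \cite{bird2021hybrid} and stated without proof, so there is no ``paper's own proof'' to compare against. Your direct unpacking of Definition~\ref{def:sets}---moving the box constraint $\|\bm\xi^c\|_\infty\le 1$ from the feasible region into the objective---is the standard justification, and your remarks on well-posedness of the minimum and on the MILP reformulation via an epigraph variable are appropriate.
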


\subsection{Problem Statement}

Consider a discrete-time linear system:
\begin{equation}\label{dt-sys}
    \x{(t+1)} = A_d \x(t) + B_d \u(t)
\end{equation}
where $\x(t)\in \mathbb{R}^n,\; \u(t)\in \mathbb{R}^m$ are the state and the control input. $A_d \in \mathbb{R}^{n \times n}$, $B_d \in \mathbb{R}^{n \times m}$ are the state matrix and the input matrix, respectively. 

We assume a state-feedback controller
\begin{equation}
\u(t) = \pi(\x(t)),\label{dt-input}    
\end{equation}
which is parameterized by an $\ell$-layer FNN  with the Rectified Linear Unit (ReLU) activation function. The closed-loop system with system model \eqref{dt-sys} and controller \eqref{dt-input} is denoted as:
\begin{align}\label{close-sys}
    \x{(t+1)} = \f_{cl}(\x(t)) \triangleq A_d\x(t) + B_d \pi(\x(t)).
\end{align}

For the closed-loop system \eqref{close-sys}, we denote $\mathcal{R}_t(\mathcal{X}_0)\triangleq\{\x(t)\in \mathbb{R}^n | \x(0) \in \mathcal{X}_0, \x{(k+1)} = \f_{cl}(\x(k)), k = 0,1,\dots, t-1\}$ the (forward) reachable set at time $t$ from a given set of initial conditions $\mathcal{X}_0 \subset \mathbb{R}^n$.

For the $\ell$-layer FNN controller, let $\mathbf W^{(k-1)}$ be the $k$-th layer weight matrix and $\mathbf v^{(k-1)}$ be the $k$-th layer bias vector, for $k=1,\dots,\ell$. Denote $\x^{(k)}$ as the neurons of the $k$-th layer, then, for $k=1,\dots,\ell-1$, we have
\begin{align}\label{equ:NN}
\x^{(k)}=ReLU(\mathbf W^{(k-1)}\x^{(k-1)}+ \mathbf v^{(k-1)})    
\end{align}
where $\x^{(0)} = \x(t)$ and  $ReLU(\x) =\max \{0, \x\}$.
Only the linear map is applied in the last layer, i.e., $\pi(\x(t)) = \x^{(\ell)} = \mathbf W^{(\ell-1)}\x^{(\ell-1)}+\mathbf v^{(\ell-1)}$.


We assume the initial set and the unsafe set for the closed-loop system \eqref{equ:reach-cl} are both represented by hybrid zonotopes. In this paper, we will investigate the following two problems.


\begin{problem} (Reachability analysis) \label{prob:1}
Given an initial set $\mathcal{X}_0$ that is represented as a hybrid zonotope, the parameters of the FNN controller $\pi$ and a time horizon $T\in \mathbb{Z}_{>0}$, compute the reachable set $\mathcal{R}_t(\mathcal{X}_0)$ for the closed-loop system \eqref{close-sys} where $t=1,\dots,T$.
\end{problem}

\begin{problem} (Safety verification) \label{prob:2}
Given unsafe set $\mathcal{O}$ represented by a hybrid zonotope, verify whether the state trajectories of the closed-loop system \eqref{close-sys} can avoid the unsafe region for $t=1,\dots,T$.
\end{problem}



\section{Exact Reachability analysis and Safety Verification}\label{sec:reach}

In this section, we consider Problem \ref{prob:1} and Problem \ref{prob:2} for the closed-loop system with an FNN controller as in \eqref{close-sys}.

\subsection{Output Analysis of Standalone FNN}

In this subsection, we will present an algorithm to compute the exact output set of a given FNN as in \eqref{equ:NN} with an input set represented as a hybrid zonotope.

From the definition of the FNN in \eqref{equ:NN}, the output of layer $k$ is the input of layer $k+1$, for $k=1,\dots,\ell-1$. Therefore, the output set of an FNN defined in can be derived layer by layer and we will focus on finding the input-output relationship for one layer. 
Using Lemma \ref{lemma:set-op}, we can pass an input set as a hybrid zonotope $\mathcal{Z}_h =HZ\langle \mathbf{c},$ $\mathbf{G}^c,\mathbf{G}^b, \mathbf{A}^c, \mathbf{A}^b, \mathbf{b}\rangle$ through a linear map as $\mathbf W\mathcal{Z}_h+ \mathbf v = HZ\langle \mathbf W \mathbf{c}+ \mathbf v,\mathbf W \mathbf{G}^c,\mathbf W \mathbf{G}^b, \mathbf{A}^c, \mathbf{A}^b, \mathbf{b}\rangle$. Thus, the only difficulty remaining is to find the output of a ReLU activation function for a hybrid zonotope. 
Inspired by the output analysis algorithm for FNN using the star sets representation in \cite{tran2019star}, we present Algorithm \ref{alg:exact} to compute the exact output set using hybrid zonotopes. 
Note that in Line 13-14, $\mathcal{H}^i_- =  \{\x \in \mathbb{R}^{n} \mid \mathbf{e}_i^T\x \leq 0\}$ and $\mathcal{H}^i_+ =  \{\x \in \mathbb{R}^{n} \mid \mathbf{e}_i^T\x \geq 0\}$ denote the half-spaces with $i$-th canonical vector $\mathbf{e}_i$, for $i=1,\dots,n$.
Algorithm \ref{alg:exact} reveals that when the input set $\mathcal{Z}_h$ to the FNN $\pi$ is a hybrid zonotope, the exact output of the FNN can also be represented as a hybrid zonotope.

\begin{algorithm}
\caption{Exact output analysis for one layer of FNN via hybrid zonotopes}\label{alg:exact}
\KwIn{weight matrix $\mathbf W$, bias vector $\mathbf v$, hybrid zonotope input sets $\mathcal{Z}_h$}
\KwOut{exact output set $\mathcal{R}$ as a hybrid zonotope}

\SetKwFunction{reachnn}{ReachNN}
\SetKwFunction{reachrelu}{ReachReLU}
\SetKwFunction{step}{StepReLU}
  
  \SetKwProg{Fn}{Function}{:}{}
  \Fn{$\mathcal{R}$ = \reachnn{$\mathcal{Z}_h$,$\mathbf W$,$\mathbf v$}}{
        $\mathcal{R} = \mathbf W \mathcal{Z}_h + \mathbf v$ \tcp*{linear map}
        $[lb\quad up]\leftarrow$ range of $\x$ in $\mathcal{I}$   \tcp*{MILP}
        $map = find(lb<0)$\\
        \For {$i$ in $map$} {
        $\mathcal{R} = \step (\mathcal{R},i,lb[i],up[i])$
        }
        \KwRet $\mathcal{R}$\
  }
  
   \SetKwProg{Fn}{Function}{:}{}
  \Fn{$\tilde{\mathcal{R}}$ = \step{${\mathcal{R}}$,$i$,$lb_i$,$up_i$}}{
        $\mathbf E_i = [\mathbf e_1\; \cdots\; \mathbf e_{i-1}\; \mathbf{0}\; \mathbf e_{i+1}\; \cdots\; \mathbf e_{n_I}]$\\
        \If{$up_i \leq 0$}{
          ${\mathcal{I}} = \mathbf E_i {\mathcal{R}}$ \tcp*{linear map}
          }
        \If{$lb_i<0\; \&\; up_i >0$}{
          $\mathcal{I}_+ = {\mathcal{R}}\cap \mathcal{H}^i_+$ \tcp*{Lemma \ref{lemma:set-op}}
          $\mathcal{I}_- = {\mathcal{R}}\cap \mathcal{H}^i_-$ \tcp*{Lemma \ref{lemma:set-op}}
          ${\mathcal{I}} = \mathcal{I}_+ \cup \mathbf E_i \mathcal{I}_-$ \tcp*{Lemma \ref{lemma:hz-union}}
        }
        
        
        \KwRet $\tilde{\mathcal{R}} = {\mathcal{I}}$\
  }

\end{algorithm}

\begin{algorithm}[!ht]
\caption{Compute $\G_1,\G_2$ in Theorem \ref{thm:sum}}\label{alg:G}
\KwIn{continuous generator matrix $\G^c\in \mathbb{R}^{n\times n_g}$ and binary generator matrix $\G^b\in \mathbb{R}^{n\times n_b}$ from hybrid zonotope $\mathcal{Z}_h$, $n^\pi_b$ - the number of binary generators of $\mathcal{Z}_h^\pi$ }
\KwOut{matrices $\G_1,\G_2$}

\SetKwFunction{ComputeG}{ComputeG}
         $\G_1 \longleftarrow \G^c$; $\G_2 \longleftarrow \G^b$;\\
         $ k  \longleftarrow \log_2(n_b^\pi+1) - \log_2(n_b+1)$;\\
         \Repeat{$k\leq 0$}{
         $\G_1 \longleftarrow [\G_1\; \mathbf{0}_{n\times 1}]$;\\
         $\G_1 \longleftarrow [\G_1\; \G_1]$; 
         $\G_2 \longleftarrow [\G_2\; \G_2]$;\\
         $m \longleftarrow 2*(\text{\# columns of } \G_1 + \text{\# columns of } \G_2)$;\\
         $\G_1 \longleftarrow [\G_1\; \mathbf{0}_{n\times m}]$; 
         $\G_2 \longleftarrow [\G_2\; \mathbf{0}_{n\times 1}]$;\\
                $k\longleftarrow k-1$;
                }
        \KwRet $\G_1,\G_2$\
\end{algorithm}

\subsection{Exact Reachable Set for Neural Feedback System} \label{sec:lin}

Next, we consider the reachability analysis for the closed-loop system \eqref{close-sys}. Recall that $\f_{cl}(\x) = A_d\x + B_d \pi (\x)$.
Note that a conservative over-approximation of the exact reachable set can be obtained by trivially adding the two terms of $\f_{cl}$ with the Minkowski sum.  
The following theorem provides the \emph{exact} form of $ \f_{cl}(\mathcal{Z}_h) = \{\f_{cl}(\x)| \x\in \mathcal{Z}_h\}$ for a given hybrid zonotope $\mathcal{Z}_h$.

\begin{theorem}\label{thm:sum}
Given any hybrid zonotope $\mathcal{Z}_h = HZ\langle \cc,$ $\G^c,\G^b,\A^c,\A^b,\bb\rangle \subset\mathbb{R}^n$ where $\G^c\in\mathbb{R}^{n\times {n_g}}$, $\G^b\in\mathbb{R}^{n\times {n_b}}$, $\A^c\in\mathbb{R}^{n_c\times {n_g}}$ and $\A^b\in\mathbb{R}^{n_c\times {n_b}}$, let $\pi(\mathcal{Z}_h)=  HZ\langle \cc_\pi,\G^c_\pi,\G^b_\pi,\A^c_\pi,\A^b_\pi,\bb_\pi\rangle\triangleq\mathcal{Z}_h^{\pi}$ be the computed output set using Algorithm \ref{alg:exact}. Then,  
$
    \bm{f}_{cl}(\mathcal{Z}_h) = HZ\langle \cc_{cl},\G^c_{cl},\G^b_{cl},\A^c_{cl},\A^b_{cl},\bb_{cl}\rangle \triangleq {\mathcal{Z}}_{h}^{cl},
$
where
\begin{align*}
\G^c_{cl} &= A_d \G_1+B_d \G^c_\pi,\; \G^b_{cl} = A_d \G_2+B_d \G^b_\pi,\\
\cc_{cl} &= A_d (\cc+(\frac{n_b^\pi+1}{n_b+1}-1)\G^b\mathbf{1}) + B_d \cc_\pi,\;\\
\A^c_{cl} & = \A^c_{\pi},\;\A^b_{cl} = \A^b_{\pi},\;\bb_{cl}=\bb_\pi,
\end{align*}
matrices $\G_1$ and $\G_2$ are given by Algorithm \ref{alg:G}, $n_b$ and $n^\pi_b$ are the numbers of binary generators of $\mathcal{Z}_h$ and $\mathcal{Z}_h^{\pi}$, respectively.
\end{theorem}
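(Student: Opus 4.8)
The plan is to run Algorithm~\ref{alg:exact} ``with bookkeeping'': alongside the intermediate hybrid zonotope $\mathcal{R}=HZ\langle\cc_\mathcal{R},\G_\mathcal{R}^c,\G_\mathcal{R}^b,\A_\mathcal{R}^c,\A_\mathcal{R}^b,\bb_\mathcal{R}\rangle$ obtained after processing some prefix of the network, I carry an auxiliary triple $(\G_1,\G_2,\cc_1)$, with $\G_1$ (resp.\ $\G_2$) having as many columns as $\mathcal{R}$ has continuous (resp.\ binary) generators, satisfying the invariant: whenever $(\bm{\xi}^c,\bm{\xi}^b)\in\mathcal{B}(\A_\mathcal{R}^c,\A_\mathcal{R}^b,\bb_\mathcal{R})$, the vector $\G_\mathcal{R}^c\bm{\xi}^c+\G_\mathcal{R}^b\bm{\xi}^b+\cc_\mathcal{R}$ is the image under the processed prefix of the input $\x=\G_1\bm{\xi}^c+\G_2\bm{\xi}^b+\cc_1\in\mathcal{Z}_h$, and every input--output pair of the prefix arises from exactly such a factor vector. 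In words, the \emph{same} factor vector that parametrizes a point of $\mathcal{R}$ reconstructs, through $(\G_1,\G_2,\cc_1)$, the input $\x\in\mathcal{Z}_h$ that produced it. The base case is the initial linear map $\mathcal{R}=\mathbf W^{(0)}\mathcal{Z}_h+\mathbf v^{(0)}$ with $(\G_1,\G_2,\cc_1)=(\G^c,\G^b,\cc)$, which is immediate from Lemma~\ref{lemma:set-op}.

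Next I check that the invariant is preserved by each of the three operations in \texttt{ReachNN}/\texttt{StepReLU}. (i) A linear map applied to the current set---the per-layer map $\mathbf W^{(k-1)}(\cdot)+\mathbf v^{(k-1)}$, or the map $\mathbf E_i$ in the saturated/negative branch---acts only on the output coordinates, leaves the factor vector untouched, and hence requires no change to $(\G_1,\G_2,\cc_1)$. (ii) Intersection with $\mathcal{H}^i_\pm$: by Lemma~\ref{lemma:set-op} this appends a single zero column to $\G_\mathcal{R}^c$ together with one new constraint, i.e.\ it adds one slack continuous factor that contributes zero to the output; accordingly $\G_1\mapsto[\G_1\ \mathbf{0}]$, with $\G_2,\cc_1$ unchanged. (iii) The union $\mathcal{I}_+\cup\mathbf E_i\mathcal{I}_-$ of Lemma~\ref{lemma:hz-union}: both operands are obtained from a common set by a halfspace intersection (and, in one branch, an extra output linear map), hence both carry the \emph{same} triple $([\G_1\ \mathbf{0}],\G_2,\cc_1)$. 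Inspecting the blocks $\A_3^c,\A_3^b,\bb_3$ of Lemma~\ref{lemma:hz-union} shows that whenever the selector binary $\hat\xi^b$ deactivates one operand, the box constraints pin that operand's continuous factors to $\mathbf{0}$ and its binary factors to $-\mathbf{1}$. Substituting these pinned values into the two branch reconstructions and matching the expressions for $\hat\xi^b=+1$ and $\hat\xi^b=-1$ forces the update $\G_1\mapsto[[\G_1\ \mathbf{0}]\ [\G_1\ \mathbf{0}]\ \mathbf{0}]$, $\G_2\mapsto[\G_2\ \G_2\ \mathbf{0}]$ (so the selector generator contributes \emph{nothing} to the recovered $\x$), and $\cc_1\mapsto\cc_1+\G_2\mathbf{1}$---precisely one iteration of the loop in Algorithm~\ref{alg:G}.

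The only binary generators ever created are the selector generators of the split neurons, so the number of union operations equals $\log_2(n_b^\pi+1)-\log_2(n_b+1)=k$; thus running Algorithm~\ref{alg:G} to completion yields the $\G_1,\G_2$ in the theorem, and the telescoping $1+2+\cdots+2^{k-1}=2^{k}-1=\frac{n_b^\pi+1}{n_b+1}-1$ gives $\cc_1=\cc+\big(\frac{n_b^\pi+1}{n_b+1}-1\big)\G^b\mathbf{1}$. Applying the invariant at the final set $\mathcal{Z}_h^\pi$ and substituting into $\f_{cl}(\x)=A_d\x+B_d\pi(\x)$,
\[
\f_{cl}(\mathcal{Z}_h)=\big\{(A_d\G_1+B_d\G_\pi^c)\bm{\xi}^c+(A_d\G_2+B_d\G_\pi^b)\bm{\xi}^b+(A_d\cc_1+B_d\cc_\pi)\ :\ (\bm{\xi}^c,\bm{\xi}^b)\in\mathcal{B}(\A_\pi^c,\A_\pi^b,\bb_\pi)\big\},
\]
which is exactly $HZ\langle\cc_{cl},\G_{cl}^c,\G_{cl}^b,\A_\pi^c,\A_\pi^b,\bb_\pi\rangle$. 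I expect case (iii) to be the crux: one must read off correctly from the intricate block matrices of Lemma~\ref{lemma:hz-union} that deactivating an operand pins its continuous factors to $\mathbf{0}$ \emph{and} its binary factors to $-\mathbf{1}$, since this is precisely what makes the center update clean ($+\G_2\mathbf{1}$, with no residual term from the new binary generator) and makes Algorithm~\ref{alg:G} a faithful replay of the construction; the other two cases and the cross-layer accounting are routine bookkeeping.
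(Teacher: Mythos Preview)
Your proposal is correct and follows essentially the same approach as the paper's proof: both rely on the fact that a factor vector of $\mathcal{Z}_h^\pi$ simultaneously parametrizes $\pi(\x)$ and (via $\G_1,\G_2$ and the shifted center) the originating input $\x\in\mathcal{Z}_h$, using that the union construction of Lemma~\ref{lemma:hz-union} pins the inactive operand's continuous factors to $\mathbf{0}$ and its binary factors to $-\mathbf{1}$. The paper states the resulting identities $\G_1\bm{\xi}^c=\G^c\bm{\xi}_1^c$ and $\G_2\bm{\xi}^b=\G^b\bm{\xi}_1^b-(\tfrac{n_b^\pi+1}{n_b+1}-1)\G^b\mathbf{1}$ directly and then checks the two inclusions, whereas you package the same bookkeeping as an explicit invariant maintained step by step through Algorithm~\ref{alg:exact}---a somewhat more systematic presentation of the same argument.
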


\begin{proof}
We will firstly show that $f_{cl}(\mathcal{Z}_h) \subseteq {\mathcal{Z}}_{h}^{cl}$. Let $\x$ be any element of set $\mathcal{Z}_h$, i.e., $\x\in \mathcal{Z}_h$. Clearly, there exists $(\bm{\xi}_1^c,\bm{\xi}_1^b) \in \mathcal{B}(\A^c,\A^b,\bb)$, such that $\x = \cc+\G^c\bm{\xi}_1^c+\G^b\bm{\xi}_1^b$. 
From Line 13-15 in Algorithm \ref{alg:exact} and using Lemma \ref{lemma:set-op} and Lemma \ref{lemma:hz-union}, it can be observed that $\G^c_\pi$ has the same structure as $\G_1$ and $\G^b_\pi$ has the same structure as $\G_2$. 
Since $\pi(\x) \in \pi(\mathcal{Z}_h) = \mathcal{Z}_h^\pi$, there must exist $\bm{\xi}^c = \mat{ {\bm{\xi}_1^c}\\ {\bm{\xi}_2^c}}$ and $\bm{\xi}^b = \mat{ {\bm{\xi}_1^b}\\ {\bm{\xi}_2^b}}$ such that $(\bm{\xi}^c,\bm{\xi}^b) \in \mathcal{B}(\A^c_\pi,\A^b_\pi,\bb_\pi)$ and $\pi(\x) = \cc_\pi+\G^c_\pi\bm{\xi}^c+\G^b_\pi\bm{\xi}^b$. According to the switch rule between the two sets in the union operation in \cite{bird2021unions}, we have $\bm{\xi}^c_2 = \bm{0}$ and $\bm{\xi}^b_2 = \mat{\bm{-1}^T & 1 & \cdots & \bm{-1}^T & 1}^T$. Thus, we get $\G_1 \bm{\xi}^c = \G^c \bm{\xi}^c_1$ and $\G_2 \bm{\xi}^b = \G^b \bm{\xi}^b_1 - (\frac{n_b^\pi+1}{n_b+1}-1)\G^b \bm{1}$. Therefore, $\f_{cl}(\x) = A_d \x + B_d\pi(\x) = A_d (\cc+\G^c\bm{\xi}_1^c+\G^b\bm{\xi}_1^b) + B_d (\cc_\pi+\G^c_\pi\bm{\xi}^c+\G^b_\pi\bm{\xi}^b) = (A_d \cc + B_d \cc_\pi + (\frac{n_b^\pi+1}{n_b+1}-1)A_d\G^b\mathbf{1}) + (A_d \G_1 + B_d \G^c_\pi)\bm{\xi}^c + (A_d \G_2 + B_d \G^b_\pi)\bm{\xi}^b = \cc_{cl} + \G^c_{cl}\bm{\xi}^c + \G^b_{cl}\bm{\xi}^b$, where $(\bm{\xi}^c,\bm{\xi}^b) \in \mathcal{B}(\A^c_\pi,\A^b_\pi,\bb_\pi) = \mathcal{B}(\A^c_{cl},\A^b_{cl},\bb_{cl})$. Thus, we have $\f_{cl}(\x)\in {\mathcal{Z}}_{h}^{cl}$. As $\x$ is arbitrarily chosen in $\mathcal{Z}_h$, we get that $\f_{cl}(\mathcal{Z}_h) \subseteq {\mathcal{Z}}_{h}^{cl}$.

Next, we prove that ${\mathcal{Z}}_{h}^{cl}\subseteq \f_{cl}(\mathcal{Z}_h) $ holds. For any $\z \in {\mathcal{Z}}_{h}^{cl}$, there exists $(\bm{\xi}^c,\bm{\xi}^b) \in \mathcal{B}(\A^c_{cl},\A^b_{cl},\bb_{cl})$ such that $\z = \cc_{cl} + \G^c_{cl}\bm{\xi}^c + \G^b_{cl}\bm{\xi}^b = (A_d \cc + B_d \cc_\pi + (\frac{n_b^\pi+1}{n_b+1}-1)A_d\G^b\mathbf{1}) + (A_d \G_1 + B_d \G^c_\pi)\bm{\xi}^c + (A_d \G_2 + B_d \G^b_\pi)\bm{\xi}^b$. Partition $\bm{\xi}^c$ and $\bm{\xi}^b$ into $\mat{ {\bm{\xi}_1^c}\\ {\bm{\xi}_2^c}}$ and $\bm{\xi}^b = \mat{ {\bm{\xi}_1^b}\\ {\bm{\xi}_2^b}}$. It follows that $(\bm{\xi}_1^c,\bm{\xi}_1^b) \in \mathcal{B}(\A^c,\A^b,\bb)$ and $ \z = A_d \cc + B_d \cc_\pi + (\cc+(\frac{n_b^\pi+1}{n_b+1}-1)A_d\G^b\mathbf{1} + (A_d \G_1 + B_d \G^c_\pi)\mat{ {\bm{\xi}_1^c}\\ {\bm{\xi}_2^c}} + (A_d \G_2 + B_d \G^b_\pi)\mat{ {\bm{\xi}_1^b}\\ {\bm{\xi}_2^b}} = A_d (\cc+\G^c\bm{\xi}_1^c+\G^b\bm{\xi}_1^b) + B_d (\cc_\pi+\G^c_\pi\bm{\xi}^c+\G^b_\pi\bm{\xi}^b)$. Let $\x = \cc+\G^c\bm{\xi}_1^c+\G^b\bm{\xi}_1^b$, we get $\x\in\mathcal{Z}_h$ and $\pi(\x) = \cc_\pi+\G^c_\pi\bm{\xi}^c+\G^b_\pi\bm{\xi}^b$. Then, $\z = A_d\x+B_d\pi(\x) = \f_{cl}(\x)\in\f_{cl}(\mathcal{Z}_h)$. Since $\z$ is arbitrary, we have ${\mathcal{Z}}_{h}^{cl}\subseteq \f_{cl}(\mathcal{Z}_h)$. In conclusion, we have $\f_{cl}(\mathcal{Z}_h) = {\mathcal{Z}}_{h}^{cl}$.
\end{proof}

Based on Theorem \ref{thm:sum}, the exact reachable sets of closed-loop system \eqref{close-sys} can be computed as follows:
 \begin{align}
\mathcal{R}_0 = \mathcal{X}_0,\;\mathcal{R}_t = \f_{cl}(\mathcal{R}_{t-1}),\; t=1,\dots,T.\label{equ:reach-cl}
\end{align}

The reachable sets computed by \eqref{equ:reach-cl} are \emph{exact} as long as the initial set can be represented by a hybrid zonotope. The price of accuracy, however, is that the complexity order (i.e., the numbers of continuous and binary generators - $n_g$ and $n_b$) of the hybrid zonotope reachable sets will grow exponentially. If $n_\pi$ is the total number of neurons in $\pi$, then, in the worst case, $n_b$ will increase in the order of $2^{n_\pi}-1$ and $n_g$ will increase in the order of $4^{n_\pi}-1$. Thus, complexity reduction techniques are needed to reduce the computation burden, which will be introduced in the next section. 

\begin{remark}
In our prior work \cite{zhang2022safety}, a method based on constrained zonotopes was proposed to compute exact reachable sets of neural feedback systems. Different from the exact reachability analysis in this section, the input set considered in \cite{zhang2022safety} is limited to a single constrained zonotope, which is unable to represent \emph{non-convex sets} as the hybrid zonotope does.  
Although one may apply Lemma \ref{lemma:hz-union} to convert the unions of constrained zonotopes into hybrid zonotopes, this will result in a set with a larger complexity order as it will take much more union operations than Algorithm \ref{alg:exact} of this work. Numerical comparisons of these two methods will be demonstrated by examples in Section \ref{sec:sim}.
\end{remark}


\begin{remark}
Although only linear feedback systems are considered in this work, the proposed approach can be readily extended to general nonlinear feedback systems by abstracting nonlinear dynamics  with a set of optimally tight piecewise linear bounds as in \cite{sidrane2022overt}.
\end{remark}

\subsection{Safety Verification}
Denote the exact reachable set from initial set $\mathcal{X}_0$ at time $t$ computed by \eqref{equ:reach-cl} be $\mathcal{R}_t(\mathcal{X}_0) = HZ\langle \cc_t,\G^c_t,\G^b_t,\A^c_t,\A^b_t,\bb_t\rangle$ for $t = 1,\dots,T$. Assume the unsafe region is represented by a hybrid zonotope $\mathcal{O} = HZ\langle \cc_o,\G^c_o,\G^b_o,\A^c_o,\A^b_o,\bb_o\rangle$. The following result provides a sufficient and necessary condition on the safety verification of the closed-loop system \eqref{close-sys}.

\begin{proposition} \label{prop:verify}
Given the reachable sets $\mathcal{R}_1,\dots,\mathcal{R}_T$ and unsafe set $\mathcal{O}$ defined above, the state trajectories of the closed-loop system \eqref{close-sys} will not enter the unsafe region if and only if the following condition is satisfied for $t \in \{1,\dots,T\}$:
\begin{align} \label{equ:emp-check}
\!\!\min &\left\{ ||\bm{\xi}^c||_{\infty} \left | \mat{\A_t^c \!\!& \!\! \mathbf{0}\\ \mathbf{0} \!\!& \!\! \A_o^c \\ \G_t^c \!\!& \!\! -\G_o^c}\bm{\xi}^c  + \mat{\A_t^b \!\!& \!\! \mathbf{0}\\ \mathbf{0} \!\!& \!\! \A_o^b \\ \G_t^b \!\!& \!\! -\G_o^b}\bm{\xi}^b\right.\right.\notag\\ 
&\left.=  \mat{\bb_t\\ \bb_o\\ \cc_o - \cc_t},\bm{\xi}^c\in\mathbb{R}^{n_{g,t}}, \bm{\xi}^b\in\{-1,1\}^{n_{b,t}} \right\} > 1.
\end{align}
\end{proposition}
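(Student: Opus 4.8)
The plan is to turn the safety question into a sequence of emptiness checks on hybrid zonotopes and then apply Lemma \ref{lemma:empty}. First I would observe that, because the sets $\mathcal{R}_1,\dots,\mathcal{R}_T$ produced by \eqref{equ:reach-cl} are \emph{exact} reachable sets of the closed-loop system \eqref{close-sys} (Theorem \ref{thm:sum} applied iteratively from $\mathcal{R}_0=\mathcal{X}_0$), there exists a trajectory of \eqref{close-sys} with $\x(0)\in\mathcal{X}_0$ and $\x(t)\in\mathcal{O}$ if and only if $\mathcal{R}_t\cap\mathcal{O}\neq\emptyset$. Hence the state trajectories avoid $\mathcal{O}$ over the horizon if and only if $\mathcal{R}_t\cap\mathcal{O}=\emptyset$ for every $t\in\{1,\dots,T\}$.

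Next I would make the intersection explicit. Applying the intersection identity in Lemma \ref{lemma:set-op} with $\mathcal{Z}_h=\mathcal{R}_t=HZ\langle\cc_t,\G^c_t,\G^b_t,\A^c_t,\A^b_t,\bb_t\rangle$ and $\mathcal{Y}_h=\mathcal{O}=HZ\langle\cc_o,\G^c_o,\G^b_o,\A^c_o,\A^b_o,\bb_o\rangle$ produces the hybrid zonotope $\mathcal{R}_t\cap\mathcal{O}$ whose continuous-constraint matrix, binary-constraint matrix, and constraint vector are $\mat{\A^c_t & \mathbf{0}\\ \mathbf{0} & \A^c_o\\ \G^c_t & -\G^c_o}$, $\mat{\A^b_t & \mathbf{0}\\ \mathbf{0} & \A^b_o\\ \G^b_t & -\G^b_o}$, and $\mat{\bb_t\\ \bb_o\\ \cc_o-\cc_t}$ --- exactly the data inside \eqref{equ:emp-check}. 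I would then invoke Lemma \ref{lemma:empty} for this hybrid zonotope: it is nonempty if and only if the minimum of $\|\bm{\xi}^c\|_\infty$ subject to those stacked equality constraints (with the binary coordinates ranging over $\{-1,1\}$) is at most $1$; equivalently, $\mathcal{R}_t\cap\mathcal{O}=\emptyset$ if and only if that minimum is strictly larger than $1$, where an infeasible program is read as having value $+\infty>1$, matching the empty intersection. Chaining this equivalence over $t=1,\dots,T$ with the first step yields the stated sufficient and necessary condition.

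The bulk of the work --- tracking generator and constraint dimensions when the two hybrid zonotopes are stacked --- is routine. The one place that needs care is the reduction in the first step: the equivalence between ``some trajectory reaches $\mathcal{O}$'' and ``$\mathcal{R}_t$ meets $\mathcal{O}$ for some $t$'' uses the \emph{exactness} of the reachable sets (not a mere over-approximation) guaranteed by Theorem \ref{thm:sum} and \eqref{equ:reach-cl}; with only an over-approximation one direction of Proposition \ref{prop:verify} would be lost, so it is worth flagging that exactness is precisely what makes the condition necessary as well as sufficient.
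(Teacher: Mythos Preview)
Your proposal is correct and follows exactly the route the paper indicates: reduce safety to the emptiness of $\mathcal{R}_t\cap\mathcal{O}$ for each $t$, build that intersection via Lemma~\ref{lemma:set-op}, and test emptiness via Lemma~\ref{lemma:empty}. The paper in fact omits the proof as a ``straight-forward application'' of these two lemmas, so your write-up is more detailed than the original but substantively identical; your remark that exactness (Theorem~\ref{thm:sum}) is what secures the necessity direction is a welcome clarification.
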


Avoiding unsafe regions can be equivalently expressed as none of the reachable sets intersect with the unsafe set. Since Proposition \ref{prop:verify} is a straight-forward application of Lemma \ref{lemma:set-op} and Lemma \ref{lemma:empty}, the proof is omitted due to space limitation.

\begin{remark}\label{remark:prop1}
The safety verification problem is formulated as $T$ MILPs \eqref{equ:emp-check} with $n_{g,t}$ continuous variables and $n_{b,t}$ binary variables. Although MILPs are well known to be NP-hard problems in general, 
some common commercial MILP solvers such as Gurobi \cite{gurobi} have shown promising performance in both average solving time and wide ranges of solvable problems. Recently, learning-based MILP solvers were proposed in \cite{bertsimas2022online} that can significantly alleviate the computational burden . The fast development of these MILP solvers enables us to incorporate these off-the-shelf tools into our verification problem.
\end{remark}

\section{Complexity Reduction} \label{sec:reduction}
Due to the intersection and union operation in Algorithm \ref{alg:exact}, both the number of continuous generators and the number of binary generators will increase fast. As mentioned in Section \ref{sec:lin}, in the worst case, these two numbers will grow exponentially which makes Theorem \ref{thm:sum} computationally heavy. In this section, we will introduce two order reduction techniques that can provide over-approximated reachable sets with fewer continuous and binary generators.

\subsection{Reducing Number of Binary Generators}

Given a hybrid zonotope, it's possible that the set can be represented by another hybrid zonotope with fewer binary generators. A rigorous approach is proposed in \cite{bird2021hybrid} to remove the redundant binary generators by exploring the independent feasible solutions for the binary variable $\bm{\xi}^b$. Although this approach can reduce the complexity of the hybrid zonotope representation, one major limitation is that it can not further reduce the binary generators without altering the set. In this subsection, however, we explore the relationship between the union and convex hull operations of hybrid zonotopes, and provide a novel method to reduce the number of binary generators while guaranteeing an over-approximation.

Let's first consider two constrained zonotopes $\mathcal{Z}_c = CZ\langle \cc_z,\G_z,\A_z,\bb_z \rangle\subset\mathbb{R}^n$ and $\mathcal{W}_c = CZ\langle \cc_w,\G_w,\A_w,\bb_w\rangle\subset\mathbb{R}^n$. It's obvious that $\mathcal{Z}_c$ and $\mathcal{W}_c$ are also equivalent to two degenerated hybrid zonotopes: $HZ\langle \cc_z,\G_z,\emptyset,\A_z,\emptyset,\bb_z\rangle$ and $ HZ\langle \cc_w,\G_w,\emptyset,\A_w,\emptyset,\bb_w\rangle$.

For any set $\mathcal{X}\subset \mathcal{R}^n$, we denote the convex hull of $\mathcal{X}$ as $conv(\mathcal{X})$ \cite{boyd2004convex}. According to Theorem 5 in \cite{raghuraman2022set}, we can compute the convex hull of $\mathcal{Z} \cup \mathcal{W}$ as a constrained zonotope $\mathcal{C}_c = conv(\mathcal{Z}_c \cup \mathcal{W}_c) = CZ\langle \cc_{ch},\G_{ch},\A_{ch},\bb_{ch}\rangle$ where
\begin{align}\label{equ:conv1}
&{\G}_{ch}=\mat{\G_z & \G_w & \frac{\cc_z-\cc_w}{2} & \bm{0}}, \; {\cc}_{ch}=\frac{{\cc}_{z}+{\cc}_{w}}{2}, \\
&{\A}_{ch}= \mat{\A_z & \bm{0} & -\frac{\bb_z}{2} & \bm{0} \\ \bm{0} & \A_w & \frac{\bb_w}{2} & \bm{0}\\ \A_{3,1} & \A_{3,2} & \A_{3,1}& I}, \; {\bb}_{ch}=\mat{\frac{1}{2}\bb_z\\\frac{1}{2}\bb_w\\-\frac{1}{2}\bm{1}} \text {, } \label{equ:conv2}\\ 
&{\A}_{3,1}=\mat{I\\-I\\\bm{0}\\\bm{0}}, {\A}_{3,2}=\mat{\bm{0}\\\bm{0}\\I\\-I}, \; {\A}_{3,0}= \mat{-\frac{1}{2}\bm{1} \\ -\frac{1}{2}\bm{1} \\ \frac{1}{2}\bm{1}\\ \frac{1}{2}\bm{1}}. \label{equ:conv3}
\end{align}

According to Lemma \ref{lemma:hz-union}, we can compute the union of $\mathcal{Z}_c$ and $\mathcal{W}_c$ as a hybrid zonotope: $\mathcal{U}_h = \mathcal{Z}_c \cup \mathcal{W}_c = HZ\langle \cc_u,\G^c_u,\G^b_u,\A_u^c,\A_u^b,\bb_u\rangle$ where
\begin{align}\label{equ:union1}
\G_{u}^{c} &\!=\!\mat{
\G_{z} \!&\! \G_{w} \!&\! \mathbf{0}
}, 
\G_{u}^{b}\!=\!
\frac{\cc_z-\cc_w}{2}
, \cc_{u}=\frac{\cc_z+\cc_w}{2} \\ \label{equ:union2}
\A_{u}^{c} &\!=\!\mat{
\A_{z} \!&\! \mathbf{0} \!&\! \mathbf{0} \\
\mathbf{0} \!&\! \A_{w} \!&\! \mathbf{0} \\
\multicolumn{2}{c}{\A_{3}^{c}} \!&\! I
}, 
\A_{u}^{b}\!=\!\mat{
 \frac{-\bb_z}{2} \\
 \frac{\bb_w}{2} \\
\A_{3}^{b}
}, \bb_{u}\!=\!\mat{
\frac{\bb_z}{2} \\
\frac{\bb_w}{2} \\
\bb_{3}
},\\
\A_{3}^{c} &\!=\!\mat{
I \!\!&\!\! \mathbf{0} \\
-I \!\!&\!\! \mathbf{0} \\
\mathbf{0} \!\!&\!\! I \\
\mathbf{0} \!\!&\!\! -I
}, \A_{3}^{b}\!=\!\mat{
\mathbf{0} \!\!&\!\! \mathbf{0} \!\!&\!\!\frac{-1}{2}\mathbf{1} \\
\mathbf{0} \!\!&\!\! \mathbf{0} \!\!&\!\! \frac{-1}{2}\mathbf{1} \\
\mathbf{0} \!\!&\!\! \mathbf{0} \!\!&\!\! \frac{1}{2} \mathbf{1} \\
\mathbf{0} \!\!&\!\! \mathbf{0} \!\!&\!\! \frac{1}{2} \mathbf{1}
}, 
\bb_{3}\!=\!\mat{
\frac{-1}{2}\mathbf{1} \\
\frac{-1}{2}\mathbf{1} \\
\frac{-1}{2}\mathbf{1} \\
\frac{-1}{2}\mathbf{1}
}\!.\label{equ:union3}
\end{align}

The relationship between the union and convex hull of two constrained zonotopes is summarized below.

\begin{lemma}\label{lemma:CH_relax}
Consider two constrained zonotopes $\mathcal{Z}_c, \mathcal{W}_c\subset\mathbb{R}^n$, and let $\mathcal{U}_h=\mathcal{Z}_c\cup \mathcal{W}_c$ be a hybrid zonotope computed as in \eqref{equ:union1}-\eqref{equ:union3} and $\mathcal{C}_c=conv(\mathcal{Z}_c\cup \mathcal{W}_c)$ be a constrained zonotope computed as in \eqref{equ:conv1}-\eqref{equ:conv3}. 
If the binary variable constraint in $\mathcal{U}_h$ is relaxed to continuous variable constraint, i.e., replace $\xi^b\in\{-1,1\}$ with $\xi^b \in [-1,1]$, to get a relaxed constrained zonotope $\mathcal{U}_c$, then $\mathcal{U}_c$ is equivalent to $\mathcal{C}_c$.
\end{lemma}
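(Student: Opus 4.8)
The plan is to show $\mathcal{U}_c = \mathcal{C}_c$ by direct comparison of the two constrained-zonotope representations, exploiting the fact that the only difference between $\mathcal{U}_h$ and $\mathcal{C}_c$ is the domain of one group of variables. First I would write out explicitly the point sets defined by $\mathcal{U}_c$ and $\mathcal{C}_c$ in terms of their factor vectors. For $\mathcal{U}_c$, relaxing $\xi^b \in \{-1,1\}$ to $\xi^b \in [-1,1]$ in \eqref{equ:union1}--\eqref{equ:union3} gives a constrained zonotope whose generator matrix is $\mat{\G_u^c & \G_u^b}$, i.e.\ $\mat{\G_z & \G_w & \bm{0} & \frac{\cc_z-\cc_w}{2}}$, with center $\frac{\cc_z+\cc_w}{2}$ and the stacked constraint matrix $\mat{\A_u^c & \A_u^b}$ and right-hand side $\bb_u$. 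For $\mathcal{C}_c$ the data is $\G_{ch}, \cc_{ch}, \A_{ch}, \bb_{ch}$ from \eqref{equ:conv1}--\eqref{equ:conv3}. The centers already agree, and the generator columns agree up to reordering (the column $\frac{\cc_z-\cc_w}{2}$ and the zero block appear in both).

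The core of the argument is then a change-of-variables bijection between the two constraint systems. I would identify the shared factors: the $\xi_z$ factors multiplying $\G_z$, the $\xi_w$ factors multiplying $\G_w$, and the scalar factor $\lambda$ multiplying $\frac{\cc_z-\cc_w}{2}$ (together with the slack factors enforced to specific values). In $\mathcal{C}_c$ the constraints read $\A_z\xi_z = \frac{1}{2}\bb_z + \frac{\lambda}{2}\bb_z$ and $\A_w\xi_w = \frac{1}{2}\bb_w - \frac{\lambda}{2}\bb_w$ after eliminating the auxiliary $I$-block via $\A_{3,1},\A_{3,2},\A_{3,0}$, which also pins $\lambda \in [-1,1]$; I would show the relaxed-union constraints in $\mathcal{U}_c$ reduce, after the analogous elimination, to exactly the same relations. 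Concretely, writing $\mu = \frac{1+\lambda}{2} \in [0,1]$, both sets become $\{\,\mu(\G_z\eta_z + \cc_z) + (1-\mu)(\G_w\eta_w + \cc_w) \mid \|\eta_z\|_\infty\le 1,\ \A_z\eta_z = \bb_z,\ \|\eta_w\|_\infty \le 1,\ \A_w\eta_w = \bb_w,\ \mu\in[0,1]\,\}$ after rescaling $\xi_z = \mu\eta_z$, $\xi_w = (1-\mu)\eta_w$; this is manifestly $conv(\mathcal{Z}_c \cup \mathcal{W}_c)$, and it is also manifestly what the relaxed $\mathcal{U}_c$ describes. Running this identification in both directions gives the set equality.

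The main obstacle I anticipate is the bookkeeping in the auxiliary $\A_3$-blocks: both representations carry extra slack generators and equality rows whose sole purpose is to force the infinity-norm ball to deliver the box constraint $\lambda \in [-1,1]$ (equivalently $\mu \in [0,1]$) on the interpolation parameter while leaving the $\xi_z,\xi_w$ factors free within their own $\|\cdot\|_\infty \le 1$ balls. I would need to verify carefully that, in the relaxed union $\mathcal{U}_c$, the block $\A_3^c,\A_3^b,\bb_3$ together with the relaxed $\xi^b\in[-1,1]$ imposes precisely the constraint that couples the magnitudes of $\xi_z$ and $\xi_w$ to $\lambda$ in the same way the convex-hull construction of \cite{raghuraman2022set} does — i.e.\ that no spurious points are added and none are lost. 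Once that block-level equivalence is checked, the remaining matching of generators and centers is immediate, and the lemma follows. An alternative, perhaps cleaner, route would be to avoid the explicit matrix comparison entirely: show directly that $\mathcal{U}_c \subseteq \mathcal{C}_c$ (since relaxing binary to continuous and then any valid point is a convex combination realizable in $\mathcal{C}_c$) and $\mathcal{C}_c \subseteq \mathcal{U}_c$ (every convex combination $\mu x_z + (1-\mu)x_w$ with $x_z\in\mathcal{Z}_c$, $x_w\in\mathcal{W}_c$ is attained by choosing $\xi^b = 2\mu-1$ and scaling the continuous factors), which sidesteps the $\A_3$ bookkeeping; I would present whichever of the two is shorter.
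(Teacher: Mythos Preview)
Your first route---direct comparison of the constrained-zonotope data---is exactly the paper's approach, and it is correct. However, you are anticipating far more work than is actually needed. The paper's proof is a two-line observation: once $\xi^b$ is relaxed, $\mathcal{U}_c = CZ\langle \cc_u,\mat{\G_u^c & \G_u^b},\mat{\A_u^c & \A_u^b},\bb_u\rangle$, and this tuple is \emph{literally identical} to $(\cc_{ch},\G_{ch},\A_{ch},\bb_{ch})$ up to swapping the last two column blocks (the zero block and the $\tfrac{\cc_z-\cc_w}{2}$ column). In particular $\A_{3,1}=\A_3^c[:,1]$, $\A_{3,2}=\A_3^c[:,2]$, the last column of $\A_3^b$ equals $\A_{3,0}$, and $\bb_3=-\tfrac{1}{2}\mathbf{1}$ is the bottom block of $\bb_{ch}$. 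So there is no need to analyze what constraints the $\A_3$ blocks ``impose'' or to pass through the $\mu$-parametrized convex-combination description: the two representations are the same symbol-by-symbol, hence define the same set. Your second, semantic, route would also work but is strictly more effort than the paper expends.
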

\begin{proof}
It's easy to check that $\G_{ch} = [\G_z \;\; \G_w \;\; \G^b_u \;\; \bm{0}]$ and $\A_{ch} = \mat{\A_{z} & \mathbf{0} & -\frac{\bb_z}{2} & \mathbf{0} \\
\mathbf{0} & \A_{w} & \frac{\bb_w}{2} & \mathbf{0} \\
& \A_{3}^{c} & \bb_{3}& I}$.
Therefore, we have $\mathcal{C}_c = CZ\langle \cc_{ch},\G_{ch},$ $\A_{ch},\bb_{ch}\rangle = CZ\langle \cc_u,\mat{\G_u^c & \G_u^b}, \mat{\A_u^c & \A_u^b},\bb\rangle = \mathcal{U}_c$.
\end{proof}

The following theorem extends Lemma \ref{lemma:CH_relax} to the reachable sets computed by \eqref{equ:reach-cl}.

\begin{theorem}\label{thm-CH}
Given any hybrid zonotope $\mathcal{Z}_h = \langle \cc,\G^c,\G^b,\A^c,\A^b,\bb\rangle$ from the reachable set computation \eqref{equ:reach-cl}, the convex hull of $\mathcal{Z}_h$ can be constructed as the constrained zonotope $\mathcal{Z}_c = CZ\langle \cc,\mat{\G^c & \G^b},\mat{\A^c & \A^b},\bb\rangle$, i.e., $\mathcal{Z}_c = conv(\mathcal{Z}_h)$.
\end{theorem}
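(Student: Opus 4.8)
The plan is to prove Theorem~\ref{thm-CH} by induction on the reachability recursion \eqref{equ:reach-cl}, using Lemma~\ref{lemma:CH_relax} as the base-case building block and showing that the two operations that generate all reachable sets --- linear maps and the union that appears in \step\ --- each interact with the ``relax the binaries to $[-1,1]$'' operation in a way that commutes with taking convex hulls. Concretely, for any hybrid zonotope $\mathcal{W}_h = HZ\langle \cc,\G^c,\G^b,\A^c,\A^b,\bb\rangle$ write $\rho(\mathcal{W}_h) \triangleq CZ\langle \cc,\mat{\G^c & \G^b},\mat{\A^c & \A^b},\bb\rangle$ for the constrained zonotope obtained by relaxing $\bm\xi^b\in\{-1,1\}^{n_b}$ to $\bm\xi^b\in[-1,1]^{n_b}$. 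Since $\rho(\mathcal{W}_h)$ is a convex set containing $\mathcal{W}_h$ (the feasible set of the relaxation is a superset), we always have $conv(\mathcal{W}_h)\subseteq \rho(\mathcal{W}_h)$; the content of the theorem is the reverse inclusion, and the claim is that it holds for every hybrid zonotope arising in \eqref{equ:reach-cl}.

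The key steps, in order. First I would record the two ``commutation'' facts. (a) For linear maps: $\rho(\mathbf R\,\mathcal{W}_h) = \mathbf R\,\rho(\mathcal{W}_h)$, which is immediate from the linear-map formula in Lemma~\ref{lemma:set-op} since that formula only touches $\cc,\G^c,\G^b$ and leaves the constraint data alone; combined with $conv(\mathbf R\,\mathcal{W}_h) = \mathbf R\,conv(\mathcal{W}_h)$, if $\rho(\mathcal{W}_h)=conv(\mathcal{W}_h)$ then $\rho(\mathbf R\,\mathcal{W}_h)=conv(\mathbf R\,\mathcal{W}_h)$. (b) For the union in \step: if $\mathcal{I}_+$ and $\mathbf E_i\mathcal{I}_-$ are hybrid zonotopes each satisfying $\rho(\cdot)=conv(\cdot)$, then the hybrid zonotope $\mathcal{U}_h=\mathcal{I}_+\cup\mathbf E_i\mathcal{I}_-$ produced by Lemma~\ref{lemma:hz-union} satisfies $\rho(\mathcal{U}_h)=conv(\mathcal{I}_+\cup\mathbf E_i\mathcal{I}_-)$. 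Fact (b) is the generalization of Lemma~\ref{lemma:CH_relax} from two constrained zonotopes to two hybrid zonotopes whose relaxations are already their own convex hulls: one checks that relaxing the single new binary generator $\hat{\G}^b$ (together with the auxiliary binary columns in $\A_3^b$) in Lemma~\ref{lemma:hz-union}'s union construction yields exactly the convex-hull construction of \cite{raghuraman2022set} applied to $\rho(\mathcal{I}_+)$ and $\rho(\mathcal{I}_-)$, hence $\rho(\mathcal{U}_h)=conv(\rho(\mathcal{I}_+)\cup\rho(\mathcal{I}_-))=conv(conv(\mathcal{I}_+)\cup conv(\mathbf E_i\mathcal{I}_-))=conv(\mathcal{I}_+\cup\mathbf E_i\mathcal{I}_-)$, where the last equality uses $conv(A\cup B)=conv(conv A\cup conv B)$ and $conv(\mathbf E_i\mathcal I_-)=\mathbf E_i conv(\mathcal I_-)$.

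Second, I would run the induction. The initial set $\mathcal{X}_0$ is a hybrid zonotope; but the base case I actually need is that every set fed into a \step\ call, and the output of \reachnn, has the $\rho=conv$ property built up from sets on which it is either trivial or follows from (a)/(b). Starting from $\mathcal{R}=\mathbf W\mathcal{Z}_h+\mathbf v$ (a linear image, so (a) applies given the property for $\mathcal{Z}_h$), each \step\ either applies a linear map $\mathbf E_i$ (use (a)) or forms $\mathcal{I}_+\cup\mathbf E_i\mathcal{I}_-$ where $\mathcal{I}_\pm$ are intersections of $\mathcal{R}$ with halfspaces $\mathcal{H}^i_\pm$. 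Here is the one genuinely delicate point: I need the $\rho=conv$ property to be preserved under the halfspace intersection of Lemma~\ref{lemma:set-op} as well. This is plausible because that intersection formula adds only one linear (continuous-generator) row to the constraints and one scalar slack continuous generator, and relaxing vs.\ not relaxing the original binaries of $\mathcal{R}$ interacts cleanly with a single extra linear equality --- but it must be argued, and it is the step most likely to require care. Then Theorem~\ref{thm:sum} writes $\f_{cl}(\mathcal{Z}_h)=A_d\mathcal{Z}_h^{?}+B_d\pi(\mathcal{Z}_h)$-type expressions whose generator/constraint data are, by inspection of the theorem's formulas, exactly a linear map of the data of $\mathcal{Z}_h^\pi=\pi(\mathcal{Z}_h)$ with the \emph{same} constraints $\A^c_{cl}=\A^c_\pi,\A^b_{cl}=\A^b_\pi,\bb_{cl}=\bb_\pi$; so once $\rho(\mathcal{Z}_h^\pi)=conv(\mathcal{Z}_h^\pi)$ is established, applying (a) with the appropriate matrix gives $\rho(\mathcal{Z}_h^{cl})=conv(\mathcal{Z}_h^{cl})$, closing the recursion \eqref{equ:reach-cl}.

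The main obstacle is fact (b) together with the halfspace-intersection preservation: Lemma~\ref{lemma:CH_relax} is stated only for two \emph{constrained} zonotopes, and lifting it to two hybrid zonotopes requires checking that the extra (non-$\hat{\G}^b$) binary variables introduced by the nested union constructions --- the $\A_3^b$ block --- also relax correctly, i.e.\ that relaxing \emph{all} binaries of $\mathcal{U}_h$ at once still yields precisely $conv(\rho(\mathcal{I}_+)\cup\rho(\mathcal{I}_-))$ and not something larger. I expect this to reduce, after bookkeeping, to the identity $conv(A\cup B)=conv(conv(A)\cup conv(B))$ plus the observation that the convex-hull constrained-zonotope construction of \cite{raghuraman2022set} depends on $\mathcal{I}_\pm$ only through their (already convex) relaxations; making that dependence explicit is the crux of the argument, and everything else is routine matrix bookkeeping that mirrors the proof of Lemma~\ref{lemma:CH_relax}.
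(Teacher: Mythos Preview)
Your route differs from the paper's. The paper's proof is short: since every binary generator of $\mathcal{Z}_h$ originates from the union in Line~15 of Algorithm~\ref{alg:exact}, one writes $\mathcal{Z}_h=\mathcal{Z}_{c,1}\cup\cdots\cup\mathcal{Z}_{c,N}$ as a nested union of constrained zonotopes (via Lemma~\ref{lemma:hz-cz}), invokes Lemma~\ref{lemma:CH_relax} to say that relaxing one binary replaces one pairwise union by its convex hull, and iterates using $conv(\cdots conv(\mathcal{Z}_{c,1}\cup\mathcal{Z}_{c,2})\cdots\cup\mathcal{Z}_{c,N})=conv(\mathcal{Z}_h)$. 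There is no operation-by-operation tracking, no separate treatment of linear maps, halfspace intersections, or the closed-loop construction of Theorem~\ref{thm:sum}; the argument works entirely at the level of the final nested-union decomposition.

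Your instinct about the halfspace step is well-placed, and it is precisely why your induction as written does not close. With the representation of Lemma~\ref{lemma:set-op} one has $\rho(\mathcal{R}\cap\mathcal{H}^i_\pm)=\rho(\mathcal{R})\cap\mathcal{H}^i_\pm$; under the hypothesis $\rho(\mathcal{R})=conv(\mathcal{R})$ this equals $conv(\mathcal{R})\cap\mathcal{H}^i_\pm$, which in general \emph{strictly} contains $conv(\mathcal{R}\cap\mathcal{H}^i_\pm)$ (take $\mathcal{R}$ to be two points on opposite sides of the hyperplane). So the intermediate sets $\mathcal{I}_\pm$ individually need not satisfy $\rho=conv$, and then your fact~(b) only delivers $\rho(\mathcal{I})=conv\big(\rho(\mathcal{I}_+)\cup\mathbf{E}_i\,\rho(\mathcal{I}_-)\big)$, which is not obviously $conv(\mathcal{I})$. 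The paper's binary-at-a-time viewpoint never isolates $\mathcal{I}_\pm$ and so sidesteps this obstacle; to make your structural induction go through you would have to prove a single combined preservation statement for the entire \texttt{StepReLU} map (split by $\mathcal{H}^i_\pm$, project by $\mathbf{E}_i$, reunite) rather than for your (c) and (b) separately.
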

\begin{proof}
From Line 15 in Algorithm \ref{alg:exact} and Lemma \ref{lemma:hz-cz}, we know that $\mathcal{Z}_h$ can be represented by the union of a finite number of constrained zonotopes. Without loss of generality, assume $\mathcal{Z}_h = \mathcal{Z}_{c,1}\cup \mathcal{Z}_{c,2} \cup \cdots \cup \mathcal{Z}_{c,N}$ with $\Zc_{c,i},,i=1,\dots,N$ being constrained zonotopes. It can be observed that using Lemma \ref{lemma:CH_relax}, we can eliminate one binary variable each time by replacing the union of two constrained zonotopes with their convex hull. Based on properties of convex hull, by repeating the same procedure for $N-1$ times, we can get $\mathcal{Z}_c = CZ\langle \cc,\mat{\G^c & \G^b},\mat{\A^c & \A^b},\bb\rangle = conv(\cdots conv(\mathcal{Z}_{c,1}\cup \mathcal{Z}_{c,2})\cdots \cup \mathcal{Z}_{c,N}) = conv(\mathcal{Z}_h)$.
\end{proof}

\begin{remark}
Note that Theorem \ref{thm-CH} is not true for an arbitrary hybrid zonotope. Although for any hybrid zonotope, relaxing the binary constraints into linear constraints leads to an over-approximation of the hybrid zonotope, it's not guaranteed to be the tightest convex relaxation. However, Theorem \ref{thm-CH} shows that our reachable set formulation computed by \eqref{equ:reach-cl} can provide the \emph{tightest} convex relaxation of the neural feedback systems with ReLU-activated FNN controllers. This property is similar to the \emph{ideal formulation} for MILPs in \cite{anderson2020strong}. Finding the tightest convex relaxation for a general hybrid zonotope is still an open problem and will be explored in our future work.
\end{remark}

\begin{example}\label{exp}
Consider a hybrid zonotope reachable set computed by \eqref{equ:reach-cl}: $\mathcal{Z}_h = HZ\langle \cc,\G^c,\G^b,\A^c,\A^b,\bb\rangle$ where
\begin{align*}
     &\cc\! =\! \mat{0.25\\2.25}, \G^c\! = \!\mat{-1&1&0&0&-0.5&1& 0&0\\-1&-1&0&0&-1&-0.5 & 0 & 0},\\
     &\G^b = \mat{-0.75\\-0.75},\A^c = \mat{1&0&1&0&0&0&0&0\\0&1&0&1&0&0&0&0\\0&0&0&0&1&0&1&0\\0&0&0&0&0&1&0&1},\\
     &\A^b = \mat{1&1&-1&-1}^T, \bb = \mat{1&1&1&1}^T.
\end{align*}
Using Theorem \ref{thm-CH} to relax all the binary constraints, then the constrained zonotope $\mathcal{Z}_c = CZ\langle \cc,\mat{\G^c & \G^b},\mat{\A^c & \A^b},\bb\rangle$ is the convex hull of $\mathcal{Z}_h$ as depicted in Figure \ref{fig:exp}, i.e. $\mathcal{Z}_c = conv(\mathcal{Z}_h)$.
\end{example}

\begin{figure}[t]
    \centering
        \includegraphics[width=0.3\textwidth]{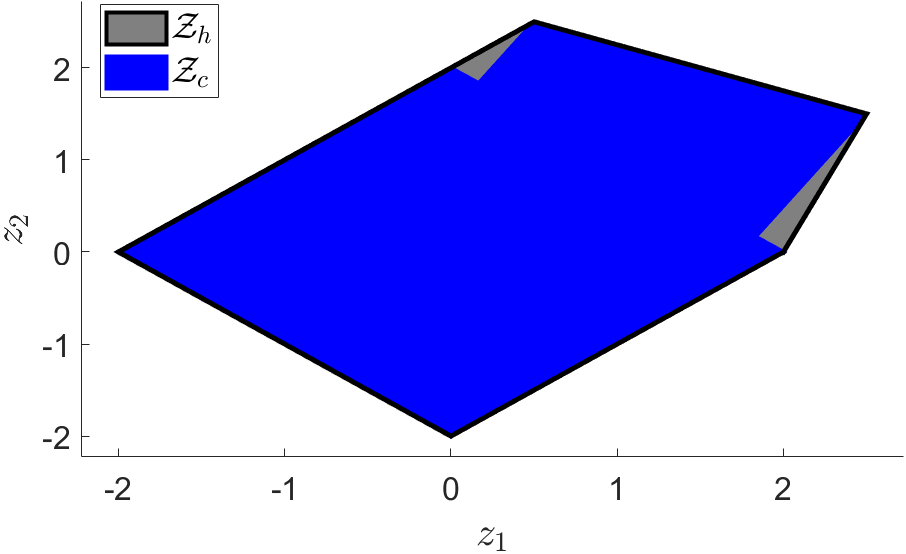}
     \caption{Hybrid zonotope $\mathcal{Z}_h$ (blue) and constrained zonotope $\mathcal{Z}_c$ (gray) given in Example \ref{exp}. $\mathcal{Z}_c$ constructed by relaxing all the binary constraints becomes the convex hull of $\mathcal{Z}_h$.}
      \label{fig:exp}
      \vspace{-0.3cm}
\end{figure}

Using Theorem \ref{thm-CH}, we can reduce the desired number of binary generators of a hybrid zonotope by replacing them with the same number of continuous generators. For example, given a hybrid zonotope $\mathcal{Z}_h$ with $n_g$ continuous generators and $n_b$ binary generators, we can reduce $\hat n_b$ binary generators and get an over-approximated hybrid zonotope $\hat{\mathcal{Z}}_h$ with $n_g+\hat n_b$ continuous generators and $n_b-\hat n_b$ binary generators. When $\hat n_b = n_b$, $\hat{\mathcal{Z}}_h$ becomes a constrained zonotope which is also the convex hull of $\mathcal{Z}_h$.

\subsection{Reducing Number of Continuous Generators}

In this subsection, we introduce two methods to reduce the number of continuous generators. For a zonotope, generator reduction can be done by identifying parallel generators and combining parallel generators through addition \cite{althoff2015computing}. The same approach can be applied to a constrained zonotope $\Zc_c = CZ\langle \cc,\G,\A,\bb\rangle$ if the lifted zonotope 
$
\Zc^+ = Z\langle \mat{\cc\\ \bb}, \mat{\G\\ \A}\rangle =  Z\langle \cc^+,\G^+ \rangle
$
has parallel generators $\G^+[:,i]\; ||\; \G^+[:,j]$ \cite{scott2016constrained}. In this case, the parallel generators can be similarly reduced by
simply combining parallel generators through addition $\G^+[:,i] + \G^+[:,j]$ in the lifted zonotope which is then transformed back to a reduced constrained zonotope with fewer generators.

This \emph{lift-then-reduce} strategy can also be extended to hybrid zonotopes. The following proposition is inspired by similar results for constrained zonotopes in \cite{scott2016constrained}.

\begin{proposition}\label{prop:HZ_reduce}
Consider a hybrid zonotope $\Zc_h = HZ\langle \cc,\G^c,\G^b,\A^c,\A^b,\bb\rangle \subset \mathbb{R}^n$ and a partition $\mat{\A^c & \A^b & \bb} = \mat{\A^c_1 & \A^b_1 & \bb_1\\ \A^c_2 & \A^b_2& \bb_2}$. For every $\bm{z}\in \mathbb{R}^n$, $\bm{z} \in \Zc_h$ if and only if
$
\mat{\bm{z}\\ \bm{0}} \in \Zc_h^+ \triangleq HZ\left\langle \mat{\cc \\ -\bb_1}, \mat{\G^c\\ \A^c_1}, \mat{\G^b \\ \A^b_1}, \A^c_2, \A^b_2, \bb_2\right\rangle.
$
\end{proposition}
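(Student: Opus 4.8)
The plan is to unfold the definition of membership in a hybrid zonotope on both sides and match the factors directly, treating the partition of the constraint rows as a bookkeeping device that splits the single linear constraint system into two stacked blocks. Concretely, $\bm z \in \Zc_h$ means there exists $(\bm\xi^c,\bm\xi^b) \in \mathcal{B}_\infty^{n_g}\times\{-1,1\}^{n_b}$ with $\G^c\bm\xi^c + \G^b\bm\xi^b + \cc = \bm z$ and $\A^c\bm\xi^c + \A^b\bm\xi^b = \bb$. Splitting the last equation according to the given row partition, $\A^c\bm\xi^c+\A^b\bm\xi^b=\bb$ holds if and only if both $\A^c_1\bm\xi^c+\A^b_1\bm\xi^b=\bb_1$ and $\A^c_2\bm\xi^c+\A^b_2\bm\xi^b=\bb_2$ hold.

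The key step is then to rewrite the first block $\A^c_1\bm\xi^c+\A^b_1\bm\xi^b=\bb_1$ as a ``generator'' equation rather than a constraint equation. Rearranging gives $\A^c_1\bm\xi^c+\A^b_1\bm\xi^b + (-\bb_1) = \bm 0$, which is exactly the statement that the last $n_{c,1}$ coordinates of $\mat{\G^c\\ \A^c_1}\bm\xi^c + \mat{\G^b\\ \A^b_1}\bm\xi^b + \mat{\cc\\ -\bb_1}$ equal $\bm 0$, while the first $n$ coordinates equal $\bm z$ precisely when $\G^c\bm\xi^c+\G^b\bm\xi^b+\cc=\bm z$. So with the same witnesses $(\bm\xi^c,\bm\xi^b)$, and retaining only the second constraint block $\A^c_2,\A^b_2,\bb_2$ as the actual equality constraints of $\Zc_h^+$, we get $\mat{\bm z\\ \bm 0}\in\Zc_h^+$ if and only if the two block equations plus the membership equation hold, which is exactly $\bm z\in\Zc_h$. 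Both directions use the identical factor vectors, so the equivalence is immediate once the decomposition is set up; I would write it as a short chain of ``if and only if'' statements.

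I expect no serious obstacle here — the only thing to be careful about is the sign convention ($-\bb_1$ in the center versus $+\bb_1$), which must be chosen so that forcing the extra output coordinates to zero reproduces $\A^c_1\bm\xi^c+\A^b_1\bm\xi^b=\bb_1$ exactly; a sign slip would break the claim. I would also note explicitly that the factor domains $\mathcal{B}_\infty^{n_g}\times\{-1,1\}^{n_b}$ are untouched by the lifting, so no spurious or lost factor tuples are introduced, and that $\Zc_h^+$ is a genuine hybrid zonotope in $\R^{n+n_{c,1}}$ (its generator and constraint matrices have the right shapes by construction). The usefulness of this lifting — that parallel-generator reduction on the continuous part of $\mat{\G^c\\ \A^c_1}$ and $\mat{\G^b\\ \A^b_1}$ yields a valid reduced hybrid zonotope after projecting back — then follows by the same reasoning used for constrained zonotopes in \cite{scott2016constrained}, but that discussion lies outside the statement itself.
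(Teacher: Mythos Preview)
Your proposal is correct and follows essentially the same approach as the paper: both unfold the membership definition, split the constraint $\A^c\bm\xi^c+\A^b\bm\xi^b=\bb$ along the row partition, and observe that the first block becomes the extra output coordinates (equal to $\bm 0$) of the lifted set while the second block remains as the equality constraint, with the same witnesses $(\bm\xi^c,\bm\xi^b)$ used in both directions. The paper compresses this into two sentences, but the logical content is identical to what you wrote.
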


\begin{proof}
We have $\bm{z} \in \Zc_h$ iff $\exists (\bm{\xi}^c ,\bm{\xi}^b) \in \mathcal{B}(\A^c,\A^b,\bb)$ such that $\bm{z} =\cc + \G^c \bm{\xi}^c + \G^b \bm{\xi}^b$. And clearly, the latter condition holds iff $\exists (\bm{\xi}^c ,\bm{\xi}^b) \in \mathcal{B}(\A^c_2,\A^b_2,\bb_2)$, such that $\mat{\bm{z}\\ \bm{0}} = \mat{\cc\\ -\bb_1} + \mat{\G^c \\ \A^c_1} \bm{\xi}^c + \mat{\G^b \\ \A^b_1} \bm{\xi}^b$.
\end{proof}

For a hybrid zonotope $\Zc_h = HZ\langle \cc,\G^c,\G^b,\A^c,\A^b,\bb\rangle$, we can form a lifted hybrid zonotope $\Zc_h^+$ by Proposition \ref{prop:HZ_reduce}:
\begin{equation}\label{equ:lift}
    \Zc_h^+ = HZ\left\langle \mat{\cc \\ -\bb}, \mat{\G^c\\ \A^c}, \mat{\G^b \\ \A^b}, \emptyset, \emptyset, \emptyset \right\rangle.
\end{equation}

It's obvious that this lifted hybrid zonotope $\Zc_h^+$ is equivalent to a union of lifted zonotopes with the same group of generators and shifted centers, i.e., $\Zc_h = \mathcal{Z}_1\cup \mathcal{Z}_2 \cup \cdots \cup \mathcal{Z}_{2^{n_b}}$, where $\Zc_i = Z\left\langle \mat{\cc+\G^b \bm{\xi}_i^b\\ -\bb+\A^b \bm{\xi}_i^b},\mat{\G^c\\ \A^c} \right\rangle$ and $\bm{\xi}_i^b \in \{-1,1\}^{n_b}$ for $i=1,\dots,2^{n_b}$. 

Therefore, if there exist parallel generators for any of the lifted zonotopes, all the other lifted zonotopes have the same set of parallel generators. We then combine the parallel generators for the lifted zonotopes and use Proposition \ref{prop:HZ_reduce} to transform the reduced lifted zonotopes back to a reduced hybrid zonotope with fewer continuous generators.


The approach described above can be used to remove the continuous generators based on the generator directions. In what follows, we provide another method to reduce a continuous generator and an equality constraint at the same time. The following proposition extends Proposition 5 in \cite{scott2016constrained} and is required in the complexity reduction algorithm.

\begin{proposition}\label{prop:HZ_reduct}
Let $\mathcal Z_h = HZ\langle \cc, \G^{c}, \G^{b}, \A^{c}, \A^{b}, \bb\rangle$. The set
$
\tilde{\mathcal{Z}}_h \triangleq  HZ\langle \cc+\Lambda_{G}\bb, \G^{c}-\Lambda_{G} \A^c, \G^{b}-\Lambda_{G}\A^b, \A^{c}-\Lambda_{A}\A^c, \A^{b}-\Lambda_{A} \A^b, \bb-\Lambda_{A}\bb\rangle
$
satisfies $\mathcal Z_h  \subseteq \tilde{\mathcal Z }_h$ for every $\Lambda_{G} \in \mathbb{R}^{n \times n_{c}}$ and $\Lambda_{A} \in \mathbb{R}^{n_{c} \times n_{c}}$.
\end{proposition}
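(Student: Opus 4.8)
The plan is to show the inclusion $\mathcal{Z}_h \subseteq \tilde{\mathcal{Z}}_h$ by taking an arbitrary point of $\mathcal{Z}_h$, exhibiting explicit factor vectors that certify its membership in $\tilde{\mathcal{Z}}_h$, and checking that the binary part of the factor is preserved so no new integrality violation is introduced. Concretely, let $\bm{z} \in \mathcal{Z}_h$. Then there exists $(\bm{\xi}^c, \bm{\xi}^b) \in \mathcal{B}(\A^c, \A^b, \bb)$ with $\bm{z} = \cc + \G^c \bm{\xi}^c + \G^b \bm{\xi}^b$; in particular $\A^c \bm{\xi}^c + \A^b \bm{\xi}^b = \bb$. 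The candidate witness is the \emph{same} pair $(\bm{\xi}^c, \bm{\xi}^b)$, and the whole proof is the observation that subtracting $\Lambda_G$ times the constraint residual from the generator expansion, and $\Lambda_A$ times the residual from the constraint itself, changes nothing because the residual is zero.

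First I would verify the output-set identity: with the same factors,
\begin{align*}
(\cc + \Lambda_G \bb) + (\G^c - \Lambda_G \A^c)\bm{\xi}^c + (\G^b - \Lambda_G \A^b)\bm{\xi}^b
&= \cc + \G^c\bm{\xi}^c + \G^b\bm{\xi}^b + \Lambda_G(\bb - \A^c\bm{\xi}^c - \A^b\bm{\xi}^b)\\
&= \bm{z} + \Lambda_G \cdot \bm{0} = \bm{z}.
\end{align*}
Next I would verify the constraint identity:
\begin{align*}
(\A^c - \Lambda_A \A^c)\bm{\xi}^c + (\A^b - \Lambda_A \A^b)\bm{\xi}^b
&= \A^c\bm{\xi}^c + \A^b\bm{\xi}^b - \Lambda_A(\A^c\bm{\xi}^c + \A^b\bm{\xi}^b)\\
&= \bb - \Lambda_A \bb,
\end{align*}
where the last step uses $\A^c\bm{\xi}^c + \A^b\bm{\xi}^b = \bb$ twice. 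Finally, since the factor pair is unchanged, we still have $\|\bm{\xi}^c\|_\infty \le 1$ and $\bm{\xi}^b \in \{-1,1\}^{n_b}$, so $(\bm{\xi}^c,\bm{\xi}^b) \in \mathcal{B}(\A^c - \Lambda_A\A^c, \A^b - \Lambda_A\A^b, \bb - \Lambda_A\bb)$. Hence $\bm{z} \in \tilde{\mathcal{Z}}_h$, and since $\bm{z}$ was arbitrary, $\mathcal{Z}_h \subseteq \tilde{\mathcal{Z}}_h$.

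There is essentially no obstacle here — the result is a direct generalization of Proposition 5 in \cite{scott2016constrained} to the hybrid case, and the only thing to be careful about is noting that the binary generator columns and the binary constraint columns are transformed by the \emph{same} $\Lambda_G$ and $\Lambda_A$ as the continuous ones, so that the cancellation of the residual goes through simultaneously for the continuous and binary blocks. It is worth remarking (though not strictly part of the proof) that the reverse inclusion does not hold in general: choosing $\Lambda_G$ to zero out a continuous generator column and $\Lambda_A$ to zero out a row of $[\A^c\ \A^b\ \bb]$ strictly enlarges the set, which is exactly why this proposition is the mechanism for the \emph{lift-then-reduce} complexity reduction that removes a continuous generator together with an equality constraint.
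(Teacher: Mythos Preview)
Your proof is correct and follows essentially the same approach as the paper: take an arbitrary $\bm z\in\mathcal{Z}_h$, keep the same factor pair $(\bm\xi^c,\bm\xi^b)$, and add the zero residual $\Lambda_G(\bb-\A^c\bm\xi^c-\A^b\bm\xi^b)$ and $\Lambda_A(\bb-\A^c\bm\xi^c-\A^b\bm\xi^b)$ to the output and constraint blocks respectively. The only cosmetic difference is that the paper writes the two verifications as a single stacked identity using the lifted vector $\mat{\bm z\\ \bm 0}$, whereas you treat the output and constraint equalities separately.
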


\begin{proof} Clearly, 
$\bm z \in \mathcal{Z}_h$ if and only if $\exists \bm{\xi}^c \in B_{\infty}^{n_g}$ and $\exists \bm{\xi}^b \in \{-1,1\}^{n_b}$ such that $\mat{\bm z\\ \bm{0}}=\mat{\G^c\\\A^c} \bm{\xi}^c+\mat{\G^b\\\A^b} \bm{\xi}^b+\mat{\cc\\-\bb}$. For any such $\bm{\xi}^c$ and $\bm{\xi}^b$, $\mat{\bm z\\ \bm{0}}=\mat{\G^c\\\A^c} \bm{\xi}^c+\mat{\G^b\\\A^b} \bm{\xi}^b+\mat{\cc\\-\bb} + \mat{\Lambda_G(\bb-\A^c\bm{\xi}^c-\A^b\bm{\xi}^b)\\\Lambda_A(\bb-\A^c\bm{\xi}^c-\A^b\bm{\xi}^b)}$, which implies $\bm z\in \tilde{\mathcal{Z}}_h$.
\end{proof}

Proposition \ref{prop:HZ_reduct} allows us to choose any $\Lambda_{A}$ and $\Lambda_{G}$ to get an over-approximation of a hybrid zonotope. Next, we will introduce a heuristic approach to select proper $\Lambda_{A}$ and $\Lambda_{G}$ that leads to a less conservative over-approximation.

Consider the equality constraints of the hybrid zonotope $\A^c \bm{\xi}^c+\A^b \bm{\xi}^b = \bb$, which can be represented equivalently as
\begin{align}
     \sum_{j\in\{1,\dots,n_g\}} \A^c[i,j] \bm{\xi}^c[j] + &\A^b[i,j] \bm{\xi}^b[j] = \bb[i], \label{equ:linear_const}\\ &\forall i\in\{1,\dots,n_c\}.\notag
\end{align}

Following the procedure in \cite{scott2016constrained}, choose 
\begin{equation}\label{equ:lambda}
    \Lambda_G \!=\! \G^c \mathbf{E}_{c,r} (\A^c[r,c])^{-1},\Lambda_A \!=\! \A^c \mathbf{E}_{c,r} (\A^c[r,c])^{-1},
\end{equation}
where $\mathbf{E}_{c,r} \in \mathbb{R}^{n_g\times n_c}$ is zero except for a one in the $(c,r)$ position and $\A^c[r,c]$ is the entry of $\A^c$ in the $(r,c)$ position. With $
\tilde{\mathcal{Z}}_h =  HZ\langle \tilde{\cc}, \tilde{\G}^{c}, \tilde{\G}^{b}, \tilde{\A}^{c}, \tilde{\A}^{b}, \tilde{\bb}\rangle =  HZ\langle \cc+\Lambda_{G}\bb, \G^{c}-\Lambda_{G} \A^c, \G^{b}-\Lambda_{G}\A^b, \A^{c}-\Lambda_{A}\A^c, \A^{b}-\Lambda_{A} \A^b, \bb-\Lambda_{A}\bb\rangle
$, this transformation uses the $r$-th row of \eqref{equ:linear_const} to solve for $\bm{\xi}^c[c]$ in terms of $\bm{\xi}^c[k], k\in\{1,\dots,c-1,c+1,\dots,n_g\}$. This yields that $\tilde{\G}^{c}$ and $\tilde{\A}^{c}$ have identical zero $c$-th columns and $\tilde{\A}^{c}$, $\tilde{\A}^{b}$ and $\tilde{\bb}$ have identically zero $r$-th rows. Removing these columns and rows results in a hybrid zonotope with one less continuous generator and one less equality constraint.

This strategy ensures that the removed $r$-th equality constraint is still imposed in the reduced hybrid zonotope but the ability to constraint the $c$-th continuous variable is lost, i.e., $|\bm{\xi}^c[c]| \leq 1$. In order to select which continuous variable to eliminate, we consider the Hausdorff error introduced by reduction $d_H(r,c,\Zc_h) = \max_{\tilde{\z}\in\tilde{\Zc}_h}\min_{\z\in\Zc_h} ||\tilde{\z}-\z||_2$.

\subsection{Reduction Algorithm}

Algorithm \ref{alg:red} summarizes the procedures in this section for reducing the numbers of continuous and binary generators of hybrid zonotopes computed by the reachability analysis \eqref{equ:reach-cl}. In this algorithm, Line 1-5 perform binary generator reduction, Line 6-7 are used to remove redundant parallel continuous generators, and Line 8-11 implement Proposition \ref{prop:HZ_reduct} to further reduce continuous generators.

\begin{algorithm}[t]
\caption{Complexity reduction for hybrid zonotopes computed by \eqref{equ:reach-cl}}\label{alg:red}
\KwIn{hybrid zonotope $\mathcal{Z}_h=HZ\langle \cc, \G^{c}, \G^{b},\A^{c},$ $ \A^{b}, \bb\rangle$, $\hat{n}_g$ - number of continuous generators to reduce, $\hat{n}_b$ - number of continuous generators to reduce}
\KwOut{reduced hybrid zonotope $\hat{\mathcal{Z}}_h$}
 $\mat{\G^b_1 & \G^b_2} \longleftarrow \G^b$\tcp*{Partition by $\hat{n}_b$}
 $\mat{\A^b_1 & \A^b_2} \longleftarrow \A^b$\tcp*{Partition by $\hat{n}_b$}
 $\hat{\G}^c\longleftarrow \mat{\G^c & \G^b_1}$, $\hat{\A}^c\longleftarrow \mat{\A^c & \A^b_1}$\\
 $\hat{\G}^b \longleftarrow \G^b_2$, $\hat{\A}^b \longleftarrow \A^b_2$, $\hat\cc \longleftarrow \cc$, $\hat\bb \longleftarrow \bb$\\
 $\hat{\mathcal{Z}}_h \longleftarrow HZ\langle \hat\cc, \hat\G^{c}, \hat\G^{b},\hat\A^{c},$ $ \hat\A^{b}, \hat\bb\rangle$\\
 $\hat{\mathcal{Z}}_h^+ \longleftarrow $ lift $\hat{\mathcal{Z}}_h$ using \eqref{equ:lift}\\
 $\hat{\mathcal{Z}}_h \longleftarrow$ remove parallel generators in $\hat{\mathcal{Z}}_h^+$ and unlift\\
 \For{$i\in\{1,\dots,\max\{n_c,\hat{n}_g\}\}$}{
 $(r,c) \longleftarrow \text{argmin}_{r,c} d_H(r,c,\hat{\mathcal{Z}}_h)$\\
 $(\Lambda_G, \Lambda_A) \longleftarrow $ \eqref{equ:lambda}\\
 $\hat{\mathcal{Z}}_h \longleftarrow HZ \langle \hat\cc+\Lambda_{G}\hat\bb, \hat\G^{c}-\Lambda_{G} \hat\A^c, \hat\G^{b}-\Lambda_{G}\hat\A^b, \hat\A^{c}-\Lambda_{A}\hat\A^c, \hat\A^{b}-\Lambda_{A} \hat\A^b, \hat\bb-\Lambda_{A}\hat\bb \rangle$
 }
\KwRet $\hat{\mathcal{Z}}_h$
\end{algorithm}

\section{Simulation}\label{sec:sim}
In this section, two simulation examples are provided to demonstrate the performance of the proposed hybrid zonotope-based reachability analysis method. 


\begin{example}\label{exp:double}

Consider a double integrator model \cite{hu2020reach,everett2021reachability}:
$
{\x}{(t+1)}=\left[\begin{array}{ll}
1 & 1 \\
0 & 1
\end{array}\right] {\x}(t)+\left[\begin{array}{c}
0.5 \\
1
\end{array}\right] {\u}(t).
$
We use the same 3-layer FNN with ReLU activation functions in \cite{everett2021reachability} as the feedback controller. Algorithm \ref{alg:exact} is implemented to get exact output sets of the FNN and then utilized to compute the reachable sets of the closed-loop system for $T = 2$ time steps based on Theorem \ref{thm:sum} and Algorithm \ref{alg:G}. The initial set is given by 
$\mathcal{X}_0 = HZ\left\langle \mat{2.5\\0},\mat{0.2 & 0\\0 & 0.2},\mat{0.25\\0},\emptyset,\emptyset,\emptyset \right\rangle$.

We denote the proposed exact reachability analysis method based on \eqref{equ:reach-cl} and Theorem \ref{thm:sum} as Reach-HZ. We compare the proposed method with the Reach-CZ algorithm (\cite{zhang2022safety}), the Reach-LP algorithm (\cite{everett2021reachability}), and the Reach-SDP algorithm (\cite{hu2020reach}). For the latter two algorithms, we also test the version with initial set partition, i.e., Reach-LP-Partition and Reach-SDP-Partition. Table \ref{tbl:1} summarizes the computation times and set over-approximation errors for the proposed method and other state-of-the-art methods. The approximation errors are computed based on the difference ratio of sizes of computed reachable sets and exact reachable sets at the last time step. Note that although both the proposed Reach-HZ method and the Reach-CZ method can return the exact reachable sets, Reach-HZ only takes about half of the time of Reach-CZ. This results from the fact that Reach-CZ computes each reachable set as multiple constrained zonotopes while our Reach-HZ represents each reachable set compactly as a single hybrid zonotope. Our algorithms are implemented in Python with Gurobi \cite{gurobi}. The computer used for all the algorithms has a 3.7GHz CPU and 32GB memory.

Figure \ref{fig:exp2} illustrates reachable sets of the double integrator system using different methods. It can be observed that both our method and Reach-CZ provide more accurate reachable sets for all the time steps compared with other methods. For the safety verification, we consider a star-shaped unsafe region as plotted in Figure \ref{fig:exp2}, which is represented by a hybrid zonotope. Table \ref{tbl:2} compares the times used to solve the safety verification conditions between the MILP-based method in Proposition \ref{prop:verify} and the LP-based method in \cite{zhang2022safety}. 
We use the commercial solver Gurobi to solve the MILP-based conditions and the runtime is shorter than the runtime of MOSEK \cite{mosek} solving the LP-based conditions. 
This is due to the fact that the LP-based conditions require solving multiple LPs for each time step, while in our approach, only one MILP is solved for each time step.
\end{example}

\begin{table}[!ht]
\centering
\scalebox{1.1}{
\begin{tabular}{|c|c|c|}
\hline
Algorithm                  & Runtime [s] & Approx. Error \\ \hline
Reach-HZ (ours) &  0.146  &  0 \\ \hline
Reach-CZ \cite{zhang2022safety}           & 0.312 &  0  \\ 
Reach-LP \cite{everett2021reachability}                   &     0.032    &       3.34             \\ 
Reach-LP-Partition         &   2.297     &       0.23          \\ 
Reach-SDP \cite{hu2020reach}       &   108.77   &    0.79   \\ 
Reach-SDP-Partition        &   5222.91  &    0.33 \\ \hline
\end{tabular}}
\caption{Comparison of different reachability methods for Example \ref{exp:double}. Reach-HZ returns exact reachable sets within a shorter time compared with Reach-CZ.}
\label{tbl:1}
\vspace{-0.4cm}
\end{table}

\begin{table}[!ht]
\centering
\scalebox{1.1}{
\begin{tabular}{|c|c|c|}
\hline
 Problem Formulation & LP \cite{zhang2022safety} & MILP (ours) \\ \hline
Runtime [s] & 0.063 & 0.016 \\ \hline
\end{tabular}}
\caption{Runtimes of the LP-based method in \cite{zhang2022safety} and the MILP-based condition in Proposition \ref{prop:verify}.}
\label{tbl:2}\vspace{-0.4cm}
\end{table}

\begin{figure}[!ht]
    \centering
        \includegraphics[width=0.37\textwidth]{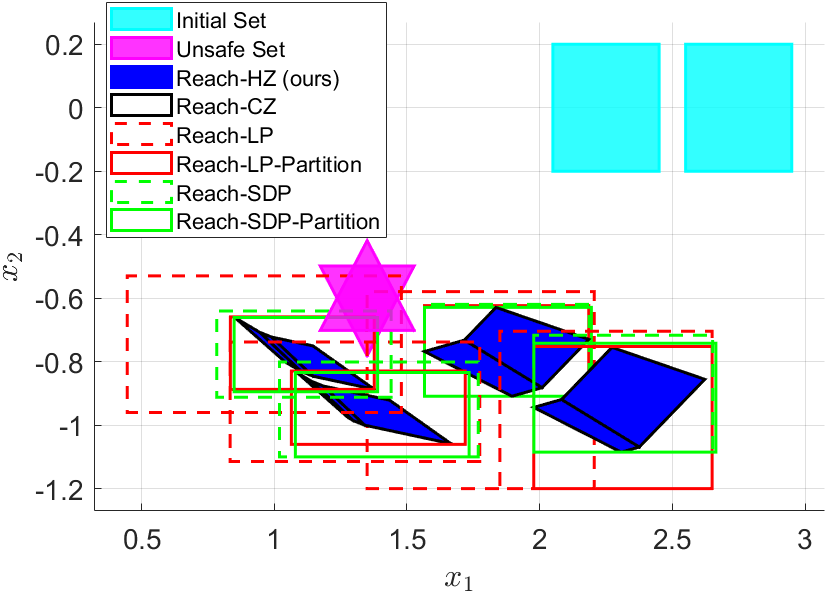}
     \caption{Reachable sets computed for the double integrator example. The initial set $\mathcal{X}_0$ is shown in cyan and the star-shaped unsafe region is in magenta. Reachable set computed by Reach-HZ is plotted in blue. Reachable set computed by Reach-CZ (\cite{zhang2022safety}) is in black, the LP-based method (\cite{everett2021reachability}) is in red, and the SDP-based method (\cite{hu2020reach}) is in green.}
      \label{fig:exp2}
\end{figure}

\begin{example}\label{exp:high-dim}
Consider a 4-D lateral dynamics model:
$$
\bm{x}(t+1) = \mat{0 & 1 & 5&0\\0&-5&0&-9.5\\0&0&0&1\\0&0.05&0&-2.8} \bm{x}(t) + \mat{0\\ 25 \\0\\50} \bm{u}(t).
$$
A 2-layer FNN is employed as the feedback controller and the initial set is given by $\mathcal{X}_0 = [0.1,0.9]\times[-0.9,-0.1]\times[0.05,0.15]\times[0.05,0.15]$.

We implement the proposed Reach-HZ method and the relaxed version denoted as Reach-HZ-Relax which combine Reach-HZ with the complexity reduction method in Algorithm \ref{alg:red}. We also run the Reach-CZ and Reach-CZ-Approx algorithms from \cite{zhang2022safety} for comparison. Figure \ref{fig:exp3} shows the one-step reachable sets of the lateral dynamics system computed by four different methods and Table \ref{tbl:3} summarizes their runtimes. Similar to the previous example, our Reach-HZ method provides the exact reachable set in a shorter time compared with Reach-CZ. Furthermore, our Reach-HZ-Relax method provides the tightest convex relaxation (the convex hull) of the reachable set while Reach-CZ-Approx returns a more conservative convex relaxation.
\end{example}
\begin{figure}[t]
    \centering
        \includegraphics[width=0.35\textwidth]{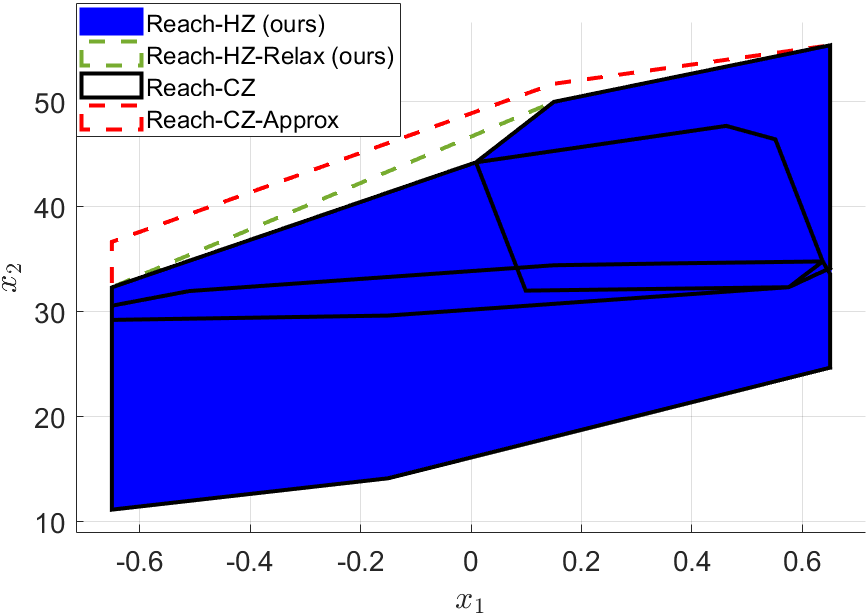}
     \caption{Reachable sets computed for the lateral dynamics example. Reachable set computed by Reach-HZ is plotted in blue and the Reach-CZ method (\cite{zhang2022safety}) is in black. Our Reach-HZ-Relax computes the tightest convex relaxation (green), while the Reach-CZ-Approx method (\cite{zhang2022safety}) only provides a more conservative convex relaxation (red).}
      \label{fig:exp3}
\end{figure}
\begin{table}[t]
\centering
\scalebox{1.1}{
\begin{tabular}{|c|c|c|}
\hline
Algorithm   & Runtime [s] & Approx. Error \\ \hline
Reach-HZ (ours) &  0.062  &  0 \\ 
Reach-HZ-Relax (ours) &  0.079  & 0.08 \\ \hline
Reach-CZ \cite{zhang2022safety}           & 0.110 &  0  \\ 
Reach-CZ-Approx \cite{zhang2022safety}                   &     0.047    &       0.20            \\ \hline
\end{tabular}}
\caption{Comparison of different reachability-based methods for the lateral dynamics example.}
\label{tbl:3}
\vspace{-0.3cm}
\end{table}

\section{Conclusion}\label{sec:concl}

In this work, we introduce a novel approach for computing exact reachable sets for neural feedback systems based on hybrid zonotopes. We show that when the input set is a hybrid zonotope, the computed reachable sets can also be compactly represented by hybrid zonotopes. 
Based on the reachability analysis, an MILP-based condition is presented for safety verification of the neural feedback system. Complexity reduction techniques are also proposed for the hybrid zonotopes to reduce the computation burden. As demonstrated in two numerical examples, the proposed approach outperforms other methods for reachability analysis and safety verification of neural feedback systems.

\bibliographystyle{IEEEtran}
\bibliography{ref}

\end{document}